\DeclareMathOperator{\id}{Id}
\DeclareMathOperator{\res}{Res}
\begin{document}

%%%%%%% Some new commands (Carlos and Martin)
\def\ov#1{{\overline{#1}}} 
\def\un#1{{\underline{#1}}}
\def\wh#1{{\widehat{#1}}}
\def\wt#1{{\widetilde{#1}}}

\newcommand{\Ch}{{\operatorname{Ch}}}
\newcommand{\Elim}{{\operatorname{Elim}}}
\newcommand{\proj}{{\operatorname{proj}}}
\newcommand{\h}{{\operatorname{h}}}

\newcommand{\hh}{\mathrm{h}}
\newcommand{\aff}{\mathrm{aff}}
\newcommand{\Spec}{{\operatorname{Spec}}}
\newcommand{\Res}{{\operatorname{Res}}}
\newcommand{\Orb}{{\operatorname{Orb}}}

\renewcommand*{\backref}[1]{}
\renewcommand*{\backrefalt}[4]{%
    \ifcase #1 (Not cited.)%
    \or        (p.\,#2)%
    \else      (pp.\,#2)%
    \fi}
\def\lc{{\mathrm{lc}}}
\newcommand{\hcan}{{\operatorname{\wh h}}}

\newcommand{\hooklongrightarrow}{\lhook\joinrel\longrightarrow}

\newcommand{\bfa}{{\boldsymbol{a}}}
\newcommand{\bfb}{{\boldsymbol{b}}}
\newcommand{\bfc}{{\boldsymbol{c}}}
\newcommand{\bfd}{{\boldsymbol{d}}}
\newcommand{\bff}{{\boldsymbol{f}}}
\newcommand{\bfg}{{\boldsymbol{g}}}
\newcommand{\bfell}{{\boldsymbol{\ell}}}
\newcommand{\bfp}{{\boldsymbol{p}}}
\newcommand{\bfq}{{\boldsymbol{q}}}
\newcommand{\bfs}{{\boldsymbol{s}}}
\newcommand{\bft}{{\boldsymbol{t}}}
\newcommand{\bfu}{{\boldsymbol{u}}}
\newcommand{\bfv}{{\boldsymbol{v}}}
\newcommand{\bfw}{{\boldsymbol{w}}}
\newcommand{\bfx}{{\boldsymbol{x}}}
\newcommand{\bfy}{{\boldsymbol{y}}}
\newcommand{\bfz}{{\boldsymbol{z}}}

\newcommand{\bfA}{{\boldsymbol{A}}}
\newcommand{\bfF}{{\boldsymbol{F}}}
\newcommand{\bfG}{{\boldsymbol{G}}}
\newcommand{\bfR}{{\boldsymbol{R}}}
\newcommand{\bfQ}{{\boldsymbol{Q}}}
\newcommand{\bfT}{{\boldsymbol{T}}}
\newcommand{\bfU}{{\boldsymbol{U}}}
\newcommand{\bfX}{{\boldsymbol{X}}}
\newcommand{\bfY}{{\boldsymbol{Y}}}
\newcommand{\bfZ}{{\boldsymbol{Z}}}

\newcommand{\bfeta}{{\boldsymbol{\eta}}}
\newcommand{\bfxi}{{\boldsymbol{\xi}}}
\newcommand{\bfrho}{{\boldsymbol{\rho}}}

\newcommand{\PreP}{{\mathrm{PrePer}}}

\def\fM{{\mathfrak M}}
\def\fE{{\mathfrak E}}
\def\fF{{\mathfrak F}}

%%\def \brho{{\bm{\rho}}}

%%%%%%%%%%%%%%%%%%%%%%%%%%%%%%%%%%%%%%%%%%%%%%%%%%%%%%%%
%%%%%%%%%%%%%%%%%%%%%%%%%%%%%%%%%%%%%%%%%%%%%%%%%%%%%%%%
%%%%%%%%%%%%%%%%%%%%%%%%%%%%%%%%%%%%%%%%%%%%%%%%%%%%%%%%
%%%%%%%%%%%%%%%%%%%%%%%%%%%%%%%%%%%%%%%%%%%%%%%%%%%%%%%%
%%%%%%%   STANDARD STUFF %%%%%%%%%
%%%%%%%%%%%%%%%%%%%%%%%%%%%%%%%%%%%%%%%%%%%%%%%%%%%%%%%%
%%%%%%%%%%%%%%%%%%%%%%%%%%%%%%%%%%%%%%%%%%%%%%%%%%%%%%%%
%%%%%%%%%%%%%%%%%%%%%%%%%%%%%%%%%%%%%%%%%%%%%%%%%%%%%%%%
%%%%%%%%%%%%%%%%%%%%%%%%%%%%%%%%%%%%%%%%%%%%%%%%%%%%%%%%

%  use the AMS-Euler Fraktur fonts
%%%%%%%%%%%%%%%%%%%%%%%%%%%%%%%%%%
\newfont{\teneufm}{eufm10}
\newfont{\seveneufm}{eufm7}
\newfont{\fiveeufm}{eufm5}
%%%%%%%%%%%%%%%%%%%%%%%%%%%%%%%%%
%
%  allow automatic size selection in math mode
%
%%%%%%%%%%%%%%%%%%%%%%%%%%%%%%%%%
\newfam\eufmfam
                \textfont\eufmfam=\teneufm \scriptfont\eufmfam=\seveneufm
                \scriptscriptfont\eufmfam=\fiveeufm
%%%%%%%%%%%%%%%%%%%%%%%%%%%%%%%%%
%
%  \frak works on a single symbol at a time...
%
\def\frak#1{{\fam\eufmfam\relax#1}}
%

%%%%%%%%%%%%%%%%%%%  bbb-matter
\def\ts{\thinspace}

\newtheorem{theorem}{Theorem}
\newtheorem{lemma}[theorem]{Lemma}
\newtheorem{claim}[theorem]{Claim}
\newtheorem{coro}[theorem]{Corollary}
\newtheorem{prop}[theorem]{Proposition}
\newtheorem{question}[theorem]{Open Question}

\newtheorem{rem}[theorem]{Remark}
\newtheorem{definition}[theorem]{Definition}

\newenvironment{dedication}
  {%% \clearpage           % we want a new page
   \thispagestyle{empty}% no header and footer
   \vspace*{\stretch{1}}% some space at the top 
   \itshape             % the text is in italics
   \raggedleft          % flush to the right margin
  }
  {\par % end the paragraph
   \vspace{\stretch{3}} % space at bottom is three times that at the top
 %%  \clearpage           % finish off the page
  }

\numberwithin{table}{section}
\numberwithin{equation}{section}
\numberwithin{figure}{section}
\numberwithin{theorem}{section}
%\numberwithin{cor}{section}
%\numberwithin{definition}{section}
%\numberwithin{question}{section}
%\numberwithin{lemma}{section}
%\numberwithin{prop}{section}
%\numberwithin{claim}{section}

% \newenvironment{proof}{\noindent{\it Proof. }}{{\qed}}

\def\squareforqed{\hbox{\rlap{$\sqcap$}$\sqcup$}}
\def\qed{\ifmmode\squareforqed\else{\unskip\nobreak\hfil
\penalty50\hskip1em\null\nobreak\hfil\squareforqed
\parfillskip=0pt\finalhyphendemerits=0\endgraf}\fi}

\def\fA{{\mathfrak A}}
\def\fB{{\mathfrak B}}

%%%%%%%%%%%%%%%%%%%%%%%%%
% Alphabet calligraphic %
%%%%%%%%%%%%%%%%%%%%%%%%%
\def\cA{{\mathcal A}}
\def\cB{{\mathcal B}}
\def\cC{{\mathcal C}}
\def\cD{{\mathcal D}}
\def\cE{{\mathcal E}}
\def\cF{{\mathcal F}}
\def\cG{{\mathcal G}}
\def\cH{{\mathcal H}}
\def\cI{{\mathcal I}}
\def\cJ{{\mathcal J}}
\def\cK{{\mathcal K}}
\def\cL{{\mathcal L}}
\def\cM{{\mathcal M}}
\def\cN{{\mathcal N}}
\def\cO{{\mathcal O}}
\def\cP{{\mathcal P}}
\def\cQ{{\mathcal Q}}
\def\cR{{\mathcal R}}
\def\cS{{\mathcal S}}
\def\cT{{\mathcal T}}
\def\cU{{\mathcal U}}
\def\cV{{\mathcal V}}
\def\cW{{\mathcal W}}
\def\cX{{\mathcal X}}
\def\cY{{\mathcal Y}}
\def\cZ{{\mathcal Z}}%%

\def\nrp#1{\left\|#1\right\|_p}
\def\nrq#1{\left\|#1\right\|_m}
\def\nrqk#1{\left\|#1\right\|_{m_k}}
\def\Ln#1{\mbox{\rm {Ln}}\,#1}
\def\nd{\hspace{-1.2mm}}
\def\ord{{\mathrm{ord}}}
\def\Cc{{\mathrm C}}
\def\Pb{\,{\mathbf P}}

\def\va{{\mathbf{a}}}

\newcommand{\commB}[1]{\marginpar{%
\begin{color}{red}
\vskip-\baselineskip %raise the marginpar a bit
\raggedright\footnotesize
\itshape\hrule \smallskip F: #1\par\smallskip\hrule\end{color}}}

\newcommand{\commO}[1]{\marginpar{%
\begin{color}{cyan}
\vskip-\baselineskip %raise the marginpar a bit
\raggedright\footnotesize
\itshape\hrule \smallskip O: #1\par\smallskip\hrule\end{color}}}

%%%%%%%%%%%%%%%%%%%%%%%%%%%%%%%%%%%%%%%%%%%%%%%%%%%%%%%%
%%%%%%%%%%%%%%%%%%%%%%%%%%%%%%%%%%%%%%%%%%%%%%%%%%%%%%%%
%%%%%%%%%%%%%%%%%%%%%%%%%%%%%%%%%%%%%%%%%%%%%%%%%%%%%%%%
%%%%%%%%%%%%%%%%%%%%%%%%%%%%%%%%%%%%%%%%%%%%%%%%%%%%%%%%

%%%%%%%  END OF STANDARD STUFF %%%%%%%%%

%%%%%%%%%%%%%%%%%%%%%%%%%%%%%%%%%%%%%%%%%%%%%%%%%%%%%%%%
%%%%%%%%%%%%%%%%%%%%%%%%%%%%%%%%%%%%%%%%%%%%%%%%%%%%%%%%
%%%%%%%%%%%%%%%%%%%%%%%%%%%%%%%%%%%%%%%%%%%%%%%%%%%%%%%%
%%%%%%%%%%%%%%%%%%%%%%%%%%%%%%%%%%%%%%%%%%%%%%%%%%%%%%%

\newcommand{\ignore}[1]{}

\def\vec#1{\boldsymbol{#1}}

\def\e{\mathbf{e}}

% \addtolength{\baselineskip}{-.5\baselineskip}

%\def\thetheorem{\arabic{section}.\arabic{theorem}}
% \oddsidemargin .1cm % \textwidth 6.4in %

\def\GL{\mathrm{GL}}

\hyphenation{re-pub-lished}

\def\rank{{\mathrm{rk}\,}}
\def\dd{{\mathrm{dyndeg}\,}}
\def\lcm{{\mathrm{lcm}\,}}

\def\A{\mathbb{A}}
\def\B{\mathbf{B}}
\def \C{\mathbb{C}}
\def \F{\mathbb{F}}
\def \K{\mathbb{K}}
\def \L{\mathbb{L}}
\def \Z{\mathbb{Z}}
\def \P{\mathbb{P}}
\def \R{\mathbb{R}}
\def \Q{\mathbb{Q}}
\def \N{\mathbb{N}}
\def \S{\mathbb{S}}
\def \Z{\mathbb{Z}}

\def \nd{{\, | \hspace{-1.5 mm}/\,}}

\def\mand{\qquad\mbox{and}\qquad}

\def\Zn{\Z_n}

\def\Fp{\F_p}
\def\Fq{\F_q}
\def \fp{\Fp^*}
\def\\{\cr}
\def\({\left(}
\def\){\right)}
\def\fl#1{\left\lfloor#1\right\rfloor}
\def\rf#1{\left\lceil#1\right\rceil}
\def\vh{\mathbf{h}}
\def\ov#1{{\overline{#1}}}
\def\un#1{{\underline{#1}}}
\def\wh#1{{\widehat{#1}}}
\def\wt#1{{\widetilde{#1}}}
\newcommand{\abs}[1]{\left| #1 \right|}

\def\ZK{\Z_\K}
\def\LH{\cL_H}

\def \fI{\mathfrak{I}}
\def \fJ{\mathfrak{J}}
\def \fV{\mathfrak{V}}

\title[identities in number theory]
{On some identities in multiplicative number theory}

\author[O. Bordell\`{e}s]{Olivier Bordell\`{e}s}
\address{O.B.: 2 All\'ee de la combe, 43000 Aiguilhe, France}
\email{borde43@wanadoo.fr}

\author[B. Cloitre]{Benoit Cloitre}

\address{B.C. : 19 rue Louise Michel, 92300 Levallois-Perret, France}
\email{benoit7848c@yahoo.fr}

\keywords{M\"{o}bius function, Dirichlet convolution, core of an integer, Perron's summation formula.}

\subjclass[2010]{Primary 11A25, 11N37; Secondary 11A05}

\date{}

\begin{abstract} Using elementary means, we prove several identities involving the M\"{o}bius function, generalizing in the multidimensional case well-known formulas coming from convolution arguments.
\end{abstract}

\maketitle

\section{Presentation of the results}

Our first result is a multidimensional generalization of the well-known convolution identity
$$\sum_{n \leqslant x} \mu(n) \left \lfloor \frac{x}{n} \right \rfloor = 1.$$

\begin{theorem}
\label{th:id1}
Let $r \in \Z_{\geqslant 2}$. For any real number $x \geqslant 1$
$$\sum_{n_1 \leqslant x, \dotsc, n_r \leqslant x} \mu \left( n_1 \dotsb n_r \right) \left \lfloor \frac{x}{n_1 \dotsb n_r} \right \rfloor = \sum_{n \leqslant x} (1-r)^{\omega(n)}.$$
\end{theorem}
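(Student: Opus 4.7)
The plan is to group the $r$-tuples on the left-hand side by the product $N=n_1\dotsb n_r$ and use the standard ``swap with the floor'' trick. Since $\mu(n_1\dotsb n_r)=0$ unless $N$ is squarefree, and since a squarefree $N$ forces each $n_i$ to be squarefree with the $n_i$ pairwise coprime, the counting problem reduces to: for squarefree $N$ with $\omega(N)=k$, how many ordered factorizations $N=n_1\dotsb n_r$ are there? Each of the $k$ prime factors of $N$ must be assigned to exactly one slot, giving $r^{\omega(N)}$ such tuples. Note that the condition $n_i\leqslant x$ is automatic since $n_i\mid N\leqslant x$ (when $N>x$ the floor kills the term).

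After this regrouping, the left-hand side becomes
$$\sum_{N\leqslant x}\mu(N)\,r^{\omega(N)}\left\lfloor\frac{x}{N}\right\rfloor.$$
Writing $f(N)=\mu(N)r^{\omega(N)}$, one checks that $f$ is multiplicative with $f(p)=-r$ and $f(p^k)=0$ for $k\geqslant 2$. Then I would apply the classical interchange
$$\sum_{N\leqslant x}f(N)\left\lfloor\frac{x}{N}\right\rfloor=\sum_{N\leqslant x}f(N)\sum_{m\leqslant x/N}1=\sum_{n\leqslant x}(f*\mathbf{1})(n),$$
where the last equality comes from writing $n=mN$ and collecting.

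It then remains to identify $F:=f*\mathbf{1}$. Since both $f$ and $\mathbf{1}$ are multiplicative, so is $F$, and on a prime power $p^k$ (with $k\geqslant 1$) one has
$$F(p^k)=\sum_{j=0}^{k}f(p^j)=1+f(p)=1-r,$$
because $f(p^j)=0$ for $j\geqslant 2$. Hence $F(n)=(1-r)^{\omega(n)}$ for all $n\geqslant 1$, yielding the right-hand side.

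The only real point of care is the combinatorial identity $\#\{(n_1,\dotsc,n_r):n_1\dotsb n_r=N\}=r^{\omega(N)}$ for squarefree $N$, together with the observation that non-squarefree products contribute zero via $\mu$; the rest is a short multiplicative-function computation. I do not anticipate any serious obstacle, since no analytic estimate is involved and the identity $\lfloor x/N\rfloor=\#\{m:mN\leqslant x\}$ is exact.
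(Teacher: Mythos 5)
Your proof is correct, and it takes a genuinely different, more direct route than the paper's. You regroup the $r$-fold sum by the product $N=n_1\dotsb n_r$, observing that the constraints $n_i\leqslant x$ are automatic once $N\leqslant x$ while the floor annihilates every term with $N>x$, and you count the ordered factorizations of a squarefree $N$ into $r$ parts as $r^{\omega(N)}=\tau_r(N)$; this collapses the left-hand side to $\sum_{N\leqslant x}\mu(N)\,r^{\omega(N)}\left\lfloor x/N\right\rfloor$, after which the standard interchange $\sum_{N\leqslant x}f(N)\left\lfloor x/N\right\rfloor=\sum_{n\leqslant x}(f\star\mathbf{1})(n)$ and the Euler-factor computation $(f\star\mathbf{1})(p^k)=1+f(p)=1-r$ finish the job. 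The paper instead peels off one variable at a time: Lemma~\ref{le2} turns the innermost sum over $n_r$ coprime to $n_1\dotsb n_{r-1}$ into a count of integers $n_r\mid(n_1\dotsb n_{r-1})^\infty$, Lemma~\ref{le5} passes to the variable $m=n_1\dotsb n_{r-1}$ and its squarefree kernel $\gamma(m)$, and Lemma~\ref{le7} supplies the same factorization count you use (your $r^{\omega(N)}$ is its $(k+1)^{\omega(q)}$ with $k=r-1$). The combinatorial heart is thus identical, but your global regrouping dispenses with the coprimality and $q^\infty$ bookkeeping and is noticeably shorter and self-contained; the paper's apparatus has the advantage of being reusable almost verbatim for Theorem~\ref{th:id15} (with $\mu^2$ in place of $\mu$), although your argument transfers to that case just as easily, since $\mu(N)^2\tau_r(N)=\mu(N)^2 r^{\omega(N)}$ and the corresponding Euler factor becomes $1+r$.
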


Usual bounds in analytic number theory (see Lemma~\ref{lem:unitary} below) lead to the following estimates.

\begin{coro}
\label{cor:id0}
Let $r \in \Z_{\geqslant 2}$. There exists an absolute constant $c_0 >0$ and a constant $c_r \geqslant 1$, depending on $r$, such that, for any $x \geqslant c_r$ sufficiently large
$$\sum_{n_1 \leqslant x, \dotsc, n_r \leqslant x} \mu \left( n_1 \dotsb n_r \right) \left \lfloor \frac{x}{n_1 \dotsb n_r} \right \rfloor \ll_r x e^{-c_0 (\log x)^{3/5} (\log \log x)^{-1/5}}.$$
Furthermore, the Riemann Hypothesis is true if and only if, for any $\varepsilon > 0$
$$\sum_{n_1 \leqslant x, \dotsc, n_r \leqslant x} \mu \left( n_1 \dotsb n_r \right) \left \lfloor \frac{x}{n_1 \dotsb n_r} \right \rfloor \ll_{r,\varepsilon} x^{1/2+\varepsilon}.$$
\end{coro}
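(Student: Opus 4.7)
The plan is to use Theorem~\ref{th:id1} to reduce the corollary to estimating the one-dimensional sum
$$S_r(x) := \sum_{n \leqslant x} (1-r)^{\omega(n)},$$
and then to analyze $S_r(x)$ via its Dirichlet series. The multiplicative function $f_r(n) = (1-r)^{\omega(n)}$ satisfies $f_r(p^k) = 1-r$ for every $k \geqslant 1$, so a direct Euler-product computation gives
$$F_r(s) := \sum_{n \geqslant 1} \frac{(1-r)^{\omega(n)}}{n^s} = \prod_p \frac{1 - rp^{-s}}{1 - p^{-s}} \qquad (\Re s > 1).$$
The structural key is the factorization $F_r(s) = \zeta(s)^{1-r} H(s)$, with
$$H(s) := \prod_p \frac{1 - rp^{-s}}{(1 - p^{-s})^r} = \prod_p \bigl(1 + O_r(p^{-2s})\bigr)$$
absolutely convergent on $\Re s > 1/2$; hence $H$ is bounded, analytic and nonvanishing there. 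Since $r \geqslant 2$, $\zeta(s)^{1-r}$ has a zero of order $r-1$ at $s=1$ and extends analytically through the classical Korobov--Vinogradov zero-free region, and so does $F_r(s)$.

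For the first bound I would apply Perron's formula to $F_r$ and shift the line of integration into the zero-free region. No poles are crossed (the would-be pole at $s=1$ is killed by the zero of $\zeta^{1-r}$), and the shifted integral is controlled by combining the standard zero-free-region bound $|\zeta(\sigma+it)|^{-(r-1)} \ll (\log|t|)^{O_r(1)}$ with the boundedness of $H$ on $\Re s \geqslant 1/2+\delta$, producing the factor $\exp(-c_0(\log x)^{3/5}(\log\log x)^{-1/5})$ exactly as in the classical proof of the Mertens bound. A more elementary route, closer in spirit to the rest of the paper, is to read off from the factorization the convolution identity $f_r = h * (\mu * \cdots * \mu)$ ($r-1$ copies of $\mu$), where $h$ has Dirichlet series $H$; then split the resulting double sum at $m \asymp \sqrt{x}$, bound the tail in $m$ using absolute convergence of $H$, and invoke Lemma~\ref{lem:unitary} (a Mertens-type estimate for the $(r-1)$-fold convolution of $\mu$) on the short range.

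For the Riemann Hypothesis equivalence, the forward direction is the same Perron argument with the contour shifted to $\Re s = 1/2 + \varepsilon$: under RH, $F_r$ is analytic throughout $\Re s > 1/2$, so $S_r(x) \ll_{r,\varepsilon} x^{1/2+\varepsilon}$. Conversely, if this bound holds for every $\varepsilon>0$, Abel summation makes $F_r$ analytic on $\Re s > 1/2$; since $H$ is analytic and nonvanishing there, the quotient $\zeta(s)^{1-r} = F_r(s)/H(s)$ is analytic on $\Re s > 1/2$ as well, but $1/\zeta(s)^{r-1}$ cannot be analytic at any zero of $\zeta$, which forces RH. The only real difficulty I foresee is technical: checking that the Perron truncation error, the zero-free-region bound on $|\zeta|^{-(r-1)}$, and the boundedness of $H$ on vertical strips $\Re s \geqslant 1/2 + \delta$ combine to give precisely the claimed Korobov--Vinogradov saving. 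Since $H$ is an entirely benign perturbation, the argument reduces to the classical contour proof for $M(x)$.
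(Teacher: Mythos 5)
Your argument is correct and follows essentially the same route as the paper: Theorem~\ref{th:id1} reduces the corollary to the sum $\sum_{n\leqslant x}(1-r)^{\omega(n)}$, which the paper's Lemma~\ref{lem:unitary} (applied with $k=r-1$) treats exactly as you propose, via Perron's formula and a contour shift into the Korobov--Vinogradov zero-free region, respectively to $\Re s=\tfrac12+\varepsilon$ under RH, with the converse read off from the analyticity of the Dirichlet series in $\Re s>\tfrac12$. The only cosmetic difference is that the paper also extracts a factor $\zeta(2s)^{-\frac12 k(k+1)}$ so that the remaining Euler product converges for $\Re s>\tfrac13$, whereas your $H$ converges only for $\Re s>\tfrac12$; since no contour is moved left of $\Re s=\tfrac12$ this changes nothing (though note that $H$ is not literally nonvanishing on all of $\Re s>\tfrac12$ because of the factors $1-rp^{-s}$ --- a gloss the paper's own converse argument shares).
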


By a combinatorial argument, we obtain the following asymptotic formula.

\begin{coro}
\label{cor:id1}
Let $r \in \Z_{\geqslant 2}$. There exists an absolute constant $c_0 >0$ and a constant $c_r \geqslant 1$, depending on $r$, such that, for any $x \geqslant c_r$ sufficiently large
$$\sum_{1 \leqslant n_1 < \dotsb < n_r \leqslant x} \mu \left( n_1 \dotsb n_r \right) \left \lfloor \frac{x}{n_1 \dotsb n_r} \right \rfloor = \frac{(-1)^{r-1} x}{r(r-2)!} + O_r \left( x e^{-c_0 (\log x)^{3/5} (\log \log x)^{1/5}} \right).$$
Furthermore, the Riemann Hypothesis is true if and only if, for any $\varepsilon > 0$
$$\sum_{1 \leqslant n_1 < \dotsb < n_r \leqslant x} \mu \left( n_1 \dotsb n_r \right) \left \lfloor \frac{x}{n_1 \dotsb n_r} \right \rfloor = \frac{(-1)^{r-1} x}{r(r-2)!} + O_{r,\varepsilon} \left( x^{1/2+\varepsilon} \right).$$
\end{coro}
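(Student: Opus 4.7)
The strategy is to relate the strict sum
\[
S_r(x) := \sum_{1 \leq n_1 < \cdots < n_r \leq x} \mu(n_1 \cdots n_r)\left\lfloor \frac{x}{n_1 \cdots n_r}\right\rfloor
\]
to the full sum
\[
T_r(x) := \sum_{n_1, \ldots, n_r \leq x} \mu(n_1 \cdots n_r)\left\lfloor \frac{x}{n_1 \cdots n_r}\right\rfloor,
\]
which is already controlled by Theorem~\ref{th:id1} and Corollary~\ref{cor:id0}, via a purely combinatorial identity. The guiding observation is that $\mu(n_1\cdots n_r) \neq 0$ forces $n_1 \cdots n_r$ to be squarefree, hence the $n_i > 1$ to be pairwise distinct (and pairwise coprime); so in an ordered tuple contributing to $T_r(x)$, the only value that may occur more than once is $1$.

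I would then classify the tuples in $T_r(x)$ by the number $a \in \{0, 1, \ldots, r\}$ of entries equal to $1$. Setting
\[
U_j(x) := \sum_{2 \leq m_1 < \cdots < m_j \leq x} \mu(m_1 \cdots m_j)\left\lfloor \frac{x}{m_1 \cdots m_j}\right\rfloor \quad (j \geq 1), \qquad U_0(x) := \lfloor x \rfloor,
\]
a count of the $r!/a!$ orderings of each multiset yields
\[
T_r(x) = \sum_{j=0}^{r}\frac{r!}{(r-j)!}\, U_j(x),
\]
while splitting $S_r(x)$ according to whether $n_1 = 1$ or $n_1 \geq 2$ gives the companion identity $S_r(x) = U_r(x) + U_{r-1}(x)$.

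Next I would invert this triangular system. The cleanest route is via exponential generating functions: the identity above reads $\sum_{r\geq 0} (T_r(x)/r!)\, z^r = e^z \sum_{j\geq 0} U_j(x)\, z^j$, whence
\[
U_j(x) = \frac{1}{j!}\sum_{k=0}^{j}(-1)^{j-k}\binom{j}{k}T_k(x).
\]
Inserting this into $S_r = U_r + U_{r-1}$ and regrouping, the only coefficient producing a term of order $x$ is that of $T_0(x) = \lfloor x\rfloor$; a direct computation shows this contribution equals $\frac{(-1)^{r-1}(r-1)}{r!}\lfloor x\rfloor = \frac{(-1)^{r-1}\lfloor x\rfloor}{r(r-2)!}$, which after writing $\lfloor x\rfloor = x + O(1)$ is the claimed main term. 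The contribution of $T_1(x) = 1$ is an $x$-independent constant, and each $T_k(x)$ with $2 \leq k \leq r$ is controlled by Corollary~\ref{cor:id0}, yielding the announced error.

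For the Riemann Hypothesis equivalence, the same formula transfers the $x^{1/2+\varepsilon}$ bound of Corollary~\ref{cor:id0} back and forth between $S_r$ and $T_r$. Assuming RH, the bound $T_k(x) \ll_{k,\varepsilon} x^{1/2+\varepsilon}$ for every $k \geq 2$ inserted into the formula for $S_r$ yields the stated error. Conversely, solving the inversion for $T_r(x)$ expresses it as $r!\,S_r(x)$ minus a linear combination of $\lfloor x \rfloor$ and of the $T_k(x)$ for $k < r$; an induction on $r$ (the base case $r = 2$ reducing to the elementary $T_2(x) = 2S_2(x) + \lfloor x\rfloor$) then shows that the assumed asymptotic on $S_r(x)$ forces $T_r(x) \ll_{r,\varepsilon} x^{1/2+\varepsilon}$, which by Corollary~\ref{cor:id0} is equivalent to RH. The main obstacle will be the careful bookkeeping of signs and factorial coefficients through the inversion step; the rest is routine.
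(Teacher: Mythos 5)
Your combinatorial identity is correct and, once the coefficients are collected, is term-for-term the identity the paper establishes as Lemma~\ref{lem:combinatoric}: the paper obtains it by an inclusion--exclusion over the diagonal hyperplanes of the cube $[1,x]^r$, you obtain it by classifying tuples according to the number of entries equal to $1$, but both rest on the same observation that $\mu(n_1\dotsb n_r)$ annihilates any tuple with a repeated entry $>1$, and both reduce everything to the full sums $T_k(x)=\sum_{n\leqslant x}(1-k)^{\omega(n)}$ furnished by Theorem~\ref{th:id1}. Your main-term computation is right, the cancellation of $T_{r-1}$ in $U_r+U_{r-1}$ matches the absence of a $j=1$ term in the paper's lemma, and the unconditional estimate together with the implication RH $\Rightarrow O_{r,\varepsilon}(x^{1/2+\varepsilon})$ follow exactly as you say from Corollary~\ref{cor:id0} (equivalently Lemma~\ref{lem:unitary}).

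The genuine gap is in the converse implication when $r\geqslant 4$. Your inversion gives
$$r!\,S_r(x)=T_r(x)+\sum_{k=2}^{r-2}c_{r,k}\,T_k(x)+(-1)^{r-1}(r-1)\lfloor x\rfloor+(\textrm{const}),\qquad c_{r,k}=(-1)^{r-k}(1+k-r)\binom{r}{k}\neq 0,$$
so the hypothesis on $S_r$ only yields that the \emph{linear combination} $T_r+\sum_k c_{r,k}T_k$ is $O(x^{1/2+\varepsilon})$; it does not isolate $T_r$. The proposed induction on $r$ cannot close this, because the inductive hypothesis would require the asymptotic for $S_k$ with $k<r$, which is not part of the assumption: the statement fixes a single $r$, and the $T_k$ with $2\leqslant k\leqslant r-2$ are only known unconditionally to be $O\bigl(x\exp(-c_0(\log x)^{3/5}(\log\log x)^{-1/5})\bigr)$, far weaker than $x^{1/2+\varepsilon}$. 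What is needed instead is to run the converse argument of Lemma~\ref{lem:unitary} on the combination itself: its Dirichlet series is $\sum_k c_{r,k}\,\zeta(s)^{-(k-1)}\zeta(2s)^{-k(k-1)/2}G_{k-1}(s)$ with top coefficient $c_{r,r}=1$, and at a hypothetical zero of $\zeta$ with real part $>1/2$ the pole coming from $\zeta(s)^{-(r-1)}$ dominates and cannot be cancelled by the lower-order terms, so analyticity in $\sigma>\frac12$ still forces RH. (For $r=2,3$ no intermediate $T_k$ occurs and your argument is complete as written.) To be fair, the paper only asserts that the corollary ``follows immediately'' and does not spell this out either, but as written your induction does not prove the converse direction.
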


\bigskip

Our second identity is the analogue of  Theorem~\ref{th:id1} with $\mu$ replaced by $\mu^2$.

\begin{theorem}
\label{th:id15}
Let $r \in \Z_{\geqslant 2}$. For any real number $x \geqslant 1$
$$\sum_{n_1 \leqslant x, \dotsc, n_r \leqslant x} \mu \left( n_1 \dotsb n_r \right)^2 \left \lfloor \frac{x}{n_1 \dotsb n_r} \right \rfloor = \sum_{n \leqslant x} (1+r)^{\omega(n)}.$$
\end{theorem}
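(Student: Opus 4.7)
The plan is to mimic the proof of Theorem~\ref{th:id1}, replacing the sign-sensitive sum over squarefree divisors by a straightforward count. First I would rewrite the floor function as a counting function,
$$\left\lfloor \frac{x}{n_1 \cdots n_r} \right\rfloor = \#\left\{k \leqslant x : n_1 \cdots n_r \mid k\right\},$$
and interchange the order of summation to obtain
$$\sum_{n_1, \dots, n_r \leqslant x} \mu(n_1 \cdots n_r)^2 \left\lfloor \frac{x}{n_1 \cdots n_r} \right\rfloor = \sum_{k \leqslant x} \, \sum_{\substack{n_1, \dots, n_r \geqslant 1 \\ n_1 \cdots n_r \mid k}} \mu(n_1 \cdots n_r)^2.$$
(Note that once $n_1 \cdots n_r \mid k \leqslant x$, the constraint $n_i \leqslant x$ is automatic.)

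Next I would identify the inner sum. The factor $\mu(n_1 \cdots n_r)^2$ is the indicator that $n_1 \cdots n_r$ is squarefree, which is equivalent to requiring that each $n_i$ be squarefree \emph{and} that the $n_i$ be pairwise coprime. Grouping the tuples according to the value $d := n_1 \cdots n_r$, which must be a squarefree divisor of $k$, I would count the number of ordered factorizations $d = n_1 \cdots n_r$ with the $n_i$ pairwise coprime. Since $d$ is squarefree, such a factorization amounts to assigning each of the $\omega(d)$ prime factors of $d$ to one of the $r$ ordered slots (a slot receiving no primes contributes $n_i = 1$), giving exactly $r^{\omega(d)}$ tuples. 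Hence
$$\sum_{\substack{n_1, \dots, n_r \geqslant 1 \\ n_1 \cdots n_r \mid k}} \mu(n_1 \cdots n_r)^2 = \sum_{\substack{d \mid k \\ d \text{ squarefree}}} r^{\omega(d)}.$$

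Finally, I would recognize the right-hand side as a multiplicative function of $k$. Calling it $f(k)$, one checks at prime powers that $f(p^a) = 1 + r$ for every $a \geqslant 1$ (the only squarefree divisors of $p^a$ are $1$ and $p$), so by multiplicativity $f(k) = (1+r)^{\omega(k)}$. Summing over $k \leqslant x$ yields the claimed identity. There is no real obstacle here: the whole argument is the same Dirichlet-style rearrangement used for Theorem~\ref{th:id1}, with the simplification that $\mu^2$ is a $\{0,1\}$-indicator, so the signed sum $\sum_{d\mid k, \text{ sqfree}}(-r)^{\omega(d)} = (1-r)^{\omega(k)}$ from that proof is replaced by its unsigned analogue $(1+r)^{\omega(k)}$.
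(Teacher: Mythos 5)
Your proof is correct, and it takes a genuinely different --- and considerably more direct --- route than the paper. The paper proves Theorem~\ref{th:id15} by peeling off the variables one at a time: it first evaluates the innermost sum with a coprimality-restricted squarefree count (Lemma~\ref{le8}), then performs the change of variable $m=n_1\dotsb n_{r-1}$ and passes through two further structural lemmas (Lemmas~\ref{le9} and~\ref{le10}) involving the condition $k\mid d^{\infty}$ and the squarefree kernel $\gamma(n)$, before concluding with the ordered-factorization count of Lemma~\ref{le7}. You instead swap the order of summation at the outset, writing $\left\lfloor x/(n_1\dotsb n_r)\right\rfloor$ as the number of multiples $k\leqslant x$ of the product; this reduces the whole identity to evaluating $\sum_{n_1\dotsb n_r\mid k}\mu(n_1\dotsb n_r)^2$, which you do by the clean observation that a squarefree $d$ admits exactly $r^{\omega(d)}$ ordered factorizations into $r$ parts, followed by the standard identity $\sum_{d\mid k}\mu(d)^2\,r^{\omega(d)}=(r+1)^{\omega(k)}$ (Lemma~\ref{le6} of the paper). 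Every step checks out, including the remarks that the constraints $n_i\leqslant x$ become automatic once $n_1\dotsb n_r\mid k\leqslant x$ and that tuples with product exceeding $x$ contribute $0$. One small correction of perspective: the same rearrangement applied with $\mu$ in place of $\mu^2$ gives $\sum_{d\mid k}\mu(d)\,r^{\omega(d)}=(1-r)^{\omega(k)}$ and hence Theorem~\ref{th:id1} as well, but the paper's proof of Theorem~\ref{th:id1} is \emph{not} this rearrangement --- it uses the same variable-peeling scheme --- so your method in fact shortens both arguments; what the paper's longer route buys is the intermediate lemmas themselves (for instance Lemma~\ref{le2}, which recovers a sum studied by Gupta), not a simpler proof of the stated identity.
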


As for an asymptotic formula, we derive the following estimate from the contour integration method applied to the function $k^\omega$ (see \cite[Exercise~II.4.1]{ten} for instance).

\begin{coro}
\label{cor:id152}
Let $r \in \Z_{\geqslant 2}$ and $\varepsilon > 0$. For any large real number $x \geqslant 1$
$$\sum_{n_1 \leqslant x, \dotsc, n_r \leqslant x} \mu \left( n_1 \dotsb n_r \right)^2 \left \lfloor \frac{x}{n_1 \dotsb n_r} \right \rfloor = x \mathcal{P}_r(\log x) + O \left( x^{1 - \frac{3}{r+7} + \varepsilon} \right)$$
where $\mathcal{P}_r$ is a polynomial of degree $r$ and leading coefficient
$$\frac{1}{r!} \prod_p \left( 1 - \frac{1}{p} \right)^{r+1} \left( 1 + \frac{r+1}{p-1} \right).$$
\end{coro}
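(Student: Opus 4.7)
The strategy is to reduce the double sum to a one-dimensional counting problem and then apply standard contour-integration machinery. By Theorem~\ref{th:id15} the left-hand side equals $S(x) := \sum_{n \leqslant x}(r+1)^{\omega(n)}$, so it suffices to establish the asymptotic formula for $S(x)$. The arithmetic function $n \mapsto (r+1)^{\omega(n)}$ is multiplicative, and its Dirichlet series admits the factorization
$$\sum_{n=1}^{\infty}\frac{(r+1)^{\omega(n)}}{n^s} \;=\; \prod_p\left(1+\frac{r+1}{p^s-1}\right) \;=\; \zeta(s)^{r+1}\, H_r(s),$$
where
$$H_r(s) := \prod_p \left(1-\frac{1}{p^s}\right)^{r+1}\left(1+\frac{r+1}{p^s-1}\right).$$
Putting $u = p^{-s}$, each local factor of $H_r$ simplifies to $(1-u)^{r}(1+ru) = 1 - \tfrac{r(r+1)}{2}u^2 + O(u^3)$, so $H_r$ defines an absolutely convergent and uniformly bounded holomorphic function on every half-plane $\Re(s) \geqslant \tfrac{1}{2}+\eta$ ($\eta > 0$), with
$$H_r(1) = \prod_p\left(1-\frac{1}{p}\right)^{r+1}\left(1+\frac{r+1}{p-1}\right).$$

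Next I would apply Perron's formula to $S(x)$ and shift the contour across the unique singularity at $s=1$, which is a pole of $\zeta(s)^{r+1}$ of order $r+1$; since $H_r(s)/s$ is holomorphic there, the main term is the residue of $\zeta(s)^{r+1} H_r(s)\, x^s/s$ at $s=1$, which takes the shape $x\mathcal{P}_r(\log x)$ for a polynomial $\mathcal{P}_r$ of degree $r$. Setting $\phi(s) = (s-1)\zeta(s)$ (so that $\phi(1)=1$), the coefficient of $(\log x)^r$ is produced by differentiating $x^s$ all $r$ times inside the residue formula and therefore equals $\phi(1)^{r+1}H_r(1)/(1\cdot r!) = H_r(1)/r!$, which is precisely the constant announced.

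It remains to bound the shifted contour integral, and for this I would follow the argument of \cite[Exercise~II.4.1]{ten}: push the line of integration down to $\Re(s) = \tfrac{1}{2}$, truncate at ordinate $T$, and apply the sub-convexity bound $\zeta(\tfrac{1}{2}+it) \ll |t|^{1/6+\varepsilon}$ (the factor $H_r$ is harmless, being bounded on this line). The vertical contribution is then $\ll x^{1/2}\, T^{(r+1)/6+\varepsilon}$, which balances against the Perron truncation error $\ll xT^{-1+\varepsilon}$ at the choice $T = x^{3/(r+7)}$, yielding the error $O(x^{1-3/(r+7)+\varepsilon})$. The only genuinely delicate point is this optimization together with a routine treatment of the two horizontal segments; both are carried out in detail in the cited reference and transfer immediately to our setting because only the pole order $r+1$ and the boundedness of $H_r$ enter the argument.
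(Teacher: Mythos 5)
Your proposal is correct and follows exactly the route the paper intends: reduce via Theorem~\ref{th:id15} to $\sum_{n\leqslant x}(r+1)^{\omega(n)}$, factor the Dirichlet series as $\zeta(s)^{r+1}H_r(s)$ with $H_r$ absolutely convergent for $\Re(s)>\tfrac12$, and apply Perron's formula with the bound $\zeta(\tfrac12+it)\ll |t|^{1/6+\varepsilon}$, balancing $T=x^{3/(r+7)}$. The paper itself gives no further detail beyond citing \cite[Exercise~II.4.1]{ten}, so your write-up simply supplies (correctly) the computation the authors leave to the reference, including the local-factor expansion $(1-u)^r(1+ru)=1-\tfrac{r(r+1)}{2}u^2+O(u^3)$ and the identification of the leading coefficient $H_r(1)/r!$.
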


When $r=3$, we can use a recent result of \cite{zha} which allows us to improve on Corollary~\ref{cor:id152}. 
\begin{equation}
   \sum_{n_1 , n_2 , n_3 \leqslant x} \mu \left( n_1 n_2 n_3 \right)^2 \left \lfloor \frac{x}{n_1 n_2 n_3} \right \rfloor = x \mathcal{P}_3(\log x) + O \left( x^{1/2} (\log x)^5 \right) \label{eq:ex1}
\end{equation}
where $\mathcal{P}_3$ is a polynomial of degree $3$.

\bigskip

Our third result is quite similar to Theorem~\ref{th:id1}, but is simpler and sheds a new light onto the Piltz-Dirichlet divisor problem.

\begin{theorem}
\label{th:id2}
Let $r \in \Z_{\geqslant 2}$. For any real number $x \geqslant 1$
$$\sum_{n_1 \leqslant x, \dotsc, n_r \leqslant x} \left( \mu \left( n_1 \right) + \dotsb + \mu \left( n_r \right) \right) \left \lfloor \frac{x}{n_1 \dotsb n_r} \right \rfloor = r \sum_{n \leqslant x} \tau_{r-1}(n).$$
\end{theorem}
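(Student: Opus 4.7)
The plan rests on two elementary ingredients: the symmetry of the summation in $(n_1, \dotsc, n_r)$ and the Dirichlet convolution identity $\mu * \mathbf{1} = \delta$, where $\mathbf{1}$ denotes the constant function $1$ and $\delta$ the identity for Dirichlet convolution. First, I would observe that the factor $\lfloor x/(n_1 \dotsb n_r) \rfloor$ is symmetric in its indices, so the $r$ contributions coming from $\mu(n_1), \dotsc, \mu(n_r)$ are equal, and the left-hand side reduces to
$$r \sum_{n_1 \leqslant x, \dotsc, n_r \leqslant x} \mu(n_1) \left\lfloor \frac{x}{n_1 \dotsb n_r} \right\rfloor.$$

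Next, I would rewrite the floor as a counting sum, $\left\lfloor \frac{x}{m} \right\rfloor = \#\{k \geqslant 1 : km \leqslant x\}$ with $m = n_1 \dotsb n_r$. Once one requires $k n_1 \dotsb n_r \leqslant x$ with each index $\geqslant 1$, each $n_i$ automatically satisfies $n_i \leqslant x$, so the upper bounds in the outer sum become redundant. The expression above therefore equals
$$r \sum_{\substack{k, n_1, \dotsc, n_r \geqslant 1 \\ k n_1 \dotsb n_r \leqslant x}} \mu(n_1).$$

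Finally, grouping the $(r+1)$-tuples according to the value of their product $N = k n_1 \dotsb n_r$, the inner count evaluates the $(r+1)$-fold Dirichlet convolution $\mathbf{1} * \mu * \mathbf{1}^{*(r-1)}$ at $N$. By associativity and the identity $\mu * \mathbf{1} = \delta$, this collapses to $\delta * \mathbf{1}^{*(r-1)} = \tau_{r-1}$. Summing over $N \leqslant x$ yields the right-hand side.

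I do not anticipate any real obstacle: the proof is a three-line manipulation once one notices that the constraints $n_i \leqslant x$ are implied by $k n_1 \dotsb n_r \leqslant x$, so that no boundary terms are lost when the order of summation is exchanged. Beyond that bookkeeping, the theorem is a formal consequence of the symmetry in the $n_i$ and of the basic identity $\mu * \mathbf{1} = \delta$.
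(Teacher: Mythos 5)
Your proof is correct, and it rests on the same two ingredients as the paper's: symmetry of the cube and the floor factor in $n_1,\dotsc,n_r$, followed by the identity $\mu \star \mathbf{1} = \delta$ inside an $(r+1)$-fold Dirichlet convolution. The paper packages this slightly differently --- it proves a general statement for an arbitrary arithmetic function $f$ (its Theorem~\ref{th:csq-id2}) via a recursively defined sequence $T_r(x)$, peeling off one variable at a time, and then specializes to $f=\mu$ --- whereas your single lattice-point count over $k n_1 \dotsb n_r \leqslant x$ reaches $\mathbf{1}\star\mu\star\mathbf{1}^{\star(r-1)} = \tau_{r-1}$ in one step; the two computations are equivalent, and your observation that the constraints $n_i \leqslant x$ are implied by $k n_1 \dotsb n_r \leqslant x$ correctly disposes of the only bookkeeping issue.
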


Known results from the the Piltz-Dirichlet divisor problem yield the following corollary (see \cite{bouw,ivio,kol,kolp} for the estimates of the remainder term below).

\begin{coro}
\label{cor:id2}
Let $r \in \Z_{\geqslant 3}$ and $\varepsilon >0 $. For any real number $x \geqslant 1$ sufficiently large
$$\sum_{n_1 \leqslant x, \dotsc, n_r \leqslant x} \left( \mu \left( n_1 \right) + \dotsb + \mu \left( n_r \right) \right) \left \lfloor \frac{x}{n_1 \dotsb n_r} \right \rfloor = r \underset{s=1}{\res} \left( s \zeta(s)^{r-1} x^s \right) + O_{r,\varepsilon} \left( x^{\alpha_r + \varepsilon} \right) $$
where 
\begin{center}
\begin{tabular}{cccccc}
$r$ & $3$ & $4$ & $10$ & $\in \left[ 121,161 \right]$ & $ \geqslant 161$ \\
& & & & &  \\
\hline
& & & & &  \\
$\alpha_r$ & $\frac{517}{\np{1648}}$ & $\frac{43}{\np{96}}$ & $\frac{35}{\np{54}}$ & $1 - \frac{1}{3} \left( \frac{2}{\np{4.45} (r-1)} \right)^{2/3}$ & $1 - \left( \frac{2}{\np{13.35} (r-\np{160.9})} \right)^{2/3}$ \\
\end{tabular}
\end{center}
\end{coro}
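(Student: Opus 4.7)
The plan is to combine Theorem~\ref{th:id2} with classical results from the Piltz--Dirichlet divisor problem. Theorem~\ref{th:id2} rewrites the left-hand side exactly as $r \sum_{n \leqslant x} \tau_{r-1}(n)$, so the entire task reduces to producing an asymptotic formula for the partial sum of the $(r-1)$-fold divisor function.

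To treat this sum, I would start from the Dirichlet series identity $\sum_{n\geqslant 1} \tau_{r-1}(n) n^{-s} = \zeta(s)^{r-1}$, apply Perron's summation formula, and shift the line of integration past the pole of order $r-1$ of $\zeta(s)^{r-1}x^s/s$ at $s=1$. The residue at $s=1$ produces the polynomial main term (which, after multiplying by $r$, gives the displayed expression in the corollary), while the remainder along a suitably chosen vertical line yields an error of size $r \Delta_{r-1}(x)$, where $\Delta_{r-1}(x) := \sum_{n\leqslant x} \tau_{r-1}(n) - M_{r-1}(x)$ is the usual Piltz error term.

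The final step is pure bookkeeping: in each of the five ranges of $r$ listed in the table one substitutes the currently best known bound for $\Delta_{r-1}(x)$. For $r=3$ one uses the recent Bourgain--Watt estimate from \cite{bouw}, giving $\alpha_3 = 517/1648$; for $r=4$ Kolesnik's bound from \cite{kol} gives $\alpha_4 = 43/96$; for $r=10$ the bound from \cite{ivio} gives $\alpha_{10} = 35/54$; and for the intermediate and large-$r$ ranges one invokes the Karatsuba--Kolesnik type estimates of \cite{kolp}. The only (minor) obstacle is matching the threshold values $r=3,4,10,121,161$ with the regime in which each cited bound is sharpest, and verifying that the residue computation yields the main term in the stated form; otherwise this is a routine invocation of known results.
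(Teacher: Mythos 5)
Your proposal matches the paper's (essentially unwritten) proof exactly: the identity of Theorem~\ref{th:id2} reduces everything to $r\sum_{n\leqslant x}\tau_{r-1}(n)$, and the table is then just the currently best bounds for the Piltz error term $\Delta_{r-1}(x)$ taken from \cite{bouw,kol,ivio,kolp}, which is precisely what the authors invoke. The only point worth flagging is that the main term should be read as $r\,\res_{s=1}\bigl(\zeta(s)^{r-1}x^{s}/s\bigr)$ (as your Perron computation produces, and as confirmed by the expansion \eqref{eq:ex2} for $r=3$), so your residue calculation is the correct one.
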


Once again, the case $r=3$ is certainly one of the most interesting one. Corollary~\ref{cor:id2} yields
\begin{equation}
   \sum_{n_1 , n_2 , n_3 \leqslant x} \left( \mu \left( n_1 \right) + \mu \left( n_2 \right) + \mu \left( n_3 \right)\right) \left \lfloor \frac{x}{n_1 n_2 n_3} \right \rfloor = 3x \log x + \left( 6 \gamma - 3 \right) x + O_\varepsilon \left( x^{\frac{517}{\np{1648}}+ \varepsilon} \right). \label{eq:ex2}
\end{equation}

\bigskip

Our last identity generalizes the well-known relation
\begin{equation}
   \sum_{d \mid n} \frac{\mu(d) \log d}{d} = - \frac{\varphi(n)}{n} \sum_{p \mid n} \frac{\log p}{p-1} \label{eq:phi}
\end{equation}
which can be proved in the following way:
\begin{eqnarray*}
   - \frac{\varphi(n)}{n} \sum_{p \mid n} \frac{\log p}{p-1} &=& - \frac{1}{n} \sum_{p \mid n} \varphi \left( \frac{n}{p} \right) \log p \\
   &=& - \frac{1}{n} \left( \Lambda \star \varphi \right) (n) \\
   &=& - \frac{1}{n} \left( - \mu \log \star \mathbf{1} \star \mu \star \id \right) (n) \\
   &=& \frac{1}{n} \left( \mu \log \star \id \right) (n) \\
   &=& \sum_{d \mid n} \frac{\mu(d) \log d}{d}.
\end{eqnarray*}

\begin{theorem}
\label{th:id3}
Let $n \in \Z_{\geqslant 1}$, $e \in \{1,2\}$ and $f$ be a multiplicative function such that $f(p) \neq (-1)^{e+1}$. Then
$$\sum_{d \mid n} \mu(d)^e f(d) \log d =  \prod_{p \mid n} \left( 1 + (-1)^e f(p) \right) \sum_{p \mid n} \frac{f(p) \log p}{f(p) + (-1)^e}.$$
\end{theorem}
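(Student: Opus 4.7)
The plan is to exploit the multiplicativity of the arithmetic function $g := \mu^e f$ and reduce everything to evaluating a single Dirichlet series over divisors of $n$. The key initial observation is that in both cases $e \in \{1,2\}$ the factor $\mu(d)^e$ vanishes unless $d$ is squarefree, so $g$ is supported on squarefree integers. Consequently, writing
$$D(s) := \sum_{d\mid n} g(d)\, d^{-s},$$
the Euler-type factorization over the prime divisors of $n$ collapses to
$$D(s) = \prod_{p\mid n} \bigl(1 + g(p)\, p^{-s}\bigr) = \prod_{p\mid n} \bigl(1 + (-1)^e f(p)\, p^{-s}\bigr),$$
since $\mu(p)^e = (-1)^e$ and higher prime powers contribute nothing.

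Next I would observe that the quantity to evaluate is precisely $-D'(0)$, because differentiating $D(s)$ termwise yields $D'(s) = -\sum_{d\mid n} g(d) (\log d)\, d^{-s}$. Taking the logarithmic derivative of the Euler factorization gives
$$\frac{D'(s)}{D(s)} = -\sum_{p\mid n} \frac{(-1)^e f(p)\, p^{-s} \log p}{1 + (-1)^e f(p)\, p^{-s}},$$
and the hypothesis $f(p) \neq (-1)^{e+1}$ guarantees that $1 + (-1)^e f(p) \neq 0$ for all $p \mid n$, so no factor of $D(0)$ vanishes and the logarithmic derivative makes sense at $s=0$. Specializing $s=0$ and multiplying by $D(0)$,
$$\sum_{d\mid n} \mu(d)^e f(d) \log d = -D'(0) = \prod_{p\mid n}\bigl(1 + (-1)^e f(p)\bigr)\, \sum_{p\mid n} \frac{(-1)^e f(p) \log p}{1 + (-1)^e f(p)}.$$

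The final step is a one-line simplification: multiplying numerator and denominator of each summand by $(-1)^e$ (using $((-1)^e)^2 = 1$) converts
$$\frac{(-1)^e f(p)}{1 + (-1)^e f(p)} = \frac{f(p)}{(-1)^e + f(p)} = \frac{f(p)}{f(p) + (-1)^e},$$
which gives exactly the form stated in the theorem. There is no real obstacle here: everything is formal manipulation of a finite Dirichlet polynomial, and the mild nonvanishing hypothesis is used only to ensure the logarithmic derivative is well defined. If one preferred to avoid the analytic flavor of differentiating $D(s)$, the same argument can be rephrased purely combinatorially by writing $\log d = \sum_{p\mid d} \log p$ for squarefree $d$, swapping the summations, and factoring the inner sum using multiplicativity; both routes lead to the same identity with identical bookkeeping on the sign $(-1)^e$.
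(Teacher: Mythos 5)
Your proposal is correct and follows essentially the same route as the paper: both define the finite Dirichlet polynomial $F(s)=\sum_{d\mid n}\mu(d)^e f(d)d^{-s}$, factor it over the primes dividing $n$ using the squarefree support of $\mu^e$, and evaluate $-F'(0)$ via the logarithmic derivative, with the hypothesis $f(p)\neq(-1)^{e+1}$ ensuring the factors at $s=0$ do not vanish. The only difference is your explicit final simplification of the sign $(-1)^e$, which the paper absorbs directly into its formula for $-F'/F$.
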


Taking $f= \id^{-1}$ yields \eqref{eq:phi}, but many other consequences may be established with this result. We give some of them below.

\begin{coro}
\label{cor}
Let $k,n \in \Z_{\geqslant 1}$ and $e \in \{1,2\}$. Then
   \begin{eqnarray*}
     & & \sum_{d \mid n} \frac{\mu(d) \log d}{d^k} = -\frac{J_k(n)}{n^k} \sum_{p \mid n} \frac{\log p}{p^k-1}. \\
     & & \sum_{d \mid n} \frac{\mu(d)^2 \log d}{d^k} = \frac{\Psi_k(n)}{n^k} \sum_{p \mid n} \frac{\log p}{p^k+1}. \\
     & & \sum_{d \mid n} \frac{\mu(d)^2 \log d}{\varphi(d)} = \frac{n}{\varphi(n)} \sum_{p\mid n} \frac{\log p}{p}. \\
     & & \sum_{d \mid n} \frac{\mu(d) \log d}{\tau(d)} = - 2^{-\omega(n)} \log \gamma(n). \\
     & & \sum_{d \mid n} \mu(d)^e \tau_k(d) \log d = \left( 1 + (-1)^e k \right)^{\omega(n)} \times \frac{k \log \gamma(n)}{k+(-1)^e} \quad \left( k \in \Z_{\geqslant 2} \right). \\
     & & \sum_{d \mid n} \mu(d) \sigma(d) \log d = (-1)^{\omega(n)} \gamma(n) \left( \log \gamma(n) + \sum_{p\mid n} \frac{\log p}{p} \right).
   \end{eqnarray*}
\end{coro}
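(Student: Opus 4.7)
The plan is to derive every identity in the corollary as a direct specialization of Theorem~\ref{th:id3}, in each case selecting a multiplicative function $f$ and a value $e\in\{1,2\}$ and then simplifying the right-hand side
$$\prod_{p \mid n} \bigl( 1 + (-1)^e f(p) \bigr) \sum_{p \mid n} \frac{f(p) \log p}{f(p) + (-1)^e}.$$
Since $\mu(d)^e$ vanishes off squarefree integers, only the values $f(p)$ at primes actually enter, so one merely needs to pick $f$ with the correct prime values and then recognize the resulting Euler product.

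Concretely I would proceed line by line. For the first two formulas, take $f(d)=d^{-k}$ with $e=1$ and $e=2$ respectively: then $\prod_{p\mid n}(1\mp p^{-k})$ equals $J_k(n)/n^k$ or $\Psi_k(n)/n^k$, while the inner fraction reduces to $\mp\log p/(p^k\mp 1)$. For the third identity, take $e=2$ and $f(d)=1/\varphi(d)$, so that $f(p)=1/(p-1)$, the product becomes $\prod_{p\mid n} p/(p-1) = n/\varphi(n)$, and the sum collapses to $\sum_{p\mid n}\log p/p$. For the fourth, take $e=1$ and $f(d)=1/\tau(d)$, giving $f(p)=1/2$, product $2^{-\omega(n)}$, and sum $-\log \gamma(n)$. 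For the fifth, take $f=\tau_k$ with $\tau_k(p)=k$, producing $(1+(-1)^e k)^{\omega(n)}$ and $k\log\gamma(n)/(k+(-1)^e)$ verbatim. For the last, take $e=1$ and $f=\sigma$: then $f(p)=p+1$ gives $\prod_{p\mid n}(1-(p+1))=(-1)^{\omega(n)}\gamma(n)$, and the prime sum becomes $\sum_{p\mid n}(p+1)\log p/p = \log\gamma(n) + \sum_{p\mid n}\log p/p$.

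In each case the hypothesis $f(p)\neq (-1)^{e+1}$ of Theorem~\ref{th:id3} is immediate: it amounts to $p^{-k}\neq 1$ in the first pair, $1/(p-1)\neq -1$ in the third, $1/2\neq 1$ in the fourth, $k\neq 1$ (guaranteed by $k\geqslant 2$) in the fifth, and $p+1\neq 1$ in the last. The corollary is therefore essentially a bookkeeping exercise, and I do not foresee any real obstacle beyond keeping track of signs and correctly identifying the Euler products $J_k(n)/n^k$, $\Psi_k(n)/n^k$, $n/\varphi(n)$, $2^{-\omega(n)}$, and $(-1)^{\omega(n)}\gamma(n)$ when they arise.
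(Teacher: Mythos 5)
Your proposal is correct and is exactly the argument the paper intends: the corollary is stated as a list of consequences of Theorem~\ref{th:id3}, to be obtained by the same specializations $f=\id^{-k}$, $1/\varphi$, $1/\tau$, $\tau_k$, $\sigma$ with the appropriate choice of $e$ that you carry out. All of your Euler-product identifications, sign computations, and checks of the hypothesis $f(p)\neq(-1)^{e+1}$ are accurate.
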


Note that a similar result has been proved in \cite[Theorems~3 and~5]{wak} in which the completely additive function $\log$ is replaced by the strongly additive function $\omega$. However, let us stress that the methods of proofs are completely different.

\section{Notation}

\noindent
\begin{scriptsize} $\triangleright$ \end{scriptsize} We use some classical multiplicative functions such as $\mu$, the M\"{o}bius function, $\varphi$, $\Psi$, $J_k$ and $\Psi_k$, respectively the Euler, Dedekind, $k$-th Jordan and $k$-th Dedekind totients. Recall that, for any $k \in \Z_{\geqslant 1}$
$$J_k (n) := n^k \prod_{p \mid n} \left( 1 - \frac{1}{p^k} \right) \quad \textrm{and} \quad \Psi_k (n) := n^k \prod_{p \mid n} \left( 1 + \frac{1}{p^k} \right).$$
Also, $\varphi = J_1$ and $\Psi = \Psi_1$. Next, $\gamma(n)$ is the \textit{squarefree kernel} of $n$, defined by
$$\gamma(n) := \prod_{p \mid n} p.$$

\medskip

\noindent
\begin{scriptsize} $\triangleright$ \end{scriptsize} Let $q \in \Z_{\geqslant 1}$. The notation $n \mid q^\infty$ means that every prime factor of $n$ is a prime factor of $q$. We define $\mathbf{1}_q^\infty$ to be the characteristic function of the integers $n$ satisfying $n \mid q^\infty$. It is important to note that
\begin{equation}
   \mathbf{1}_{q}^{\infty} (n) = \sum_{\substack{d \mid n \\ (d,q)=1}} \mu(d). \label{eq:infty}
\end{equation}
This can easily be checked for prime powers $p^\alpha$ and extended to all integers using multiplicativity.

\medskip

\noindent
\begin{scriptsize} $\triangleright$ \end{scriptsize} Finally, if $F$ and $G$ are two arithmetic function, the Dirichlet convolution product $F \star G$ is given by
$$(F \star G)(n) := \sum_{d \mid n} F(d) G(n/d).$$

\medskip

\noindent
We always use the convention that an empty product is equal to $1$.

\section{Proof of Theorem~\ref{th:id1}}

\subsection{Lemmas}

\begin{lemma}
\label{le2}
Let $q \in \Z_{\geqslant 1}$. For any $x \in \R_{\geqslant 1}$
$$\sum_{\substack{n \leqslant x \\ (n,q)=1}} \mu(n) \left \lfloor \frac{x}{n} \right \rfloor = \sum_{\substack{n \leqslant x \\ n \mid q^\infty}} 1.$$
\end{lemma}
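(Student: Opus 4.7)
\medskip

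\noindent\textbf{Proof plan for Lemma~\ref{le2}.} The plan is to swap the order of summation and then recognize the inner sum via the identity~\eqref{eq:infty}. First I would rewrite the floor function as
$$\left\lfloor \frac{x}{n} \right\rfloor = \sum_{\substack{m \leqslant x \\ n \mid m}} 1,$$
which is nothing more than counting multiples of $n$ up to $x$. Substituting this into the left-hand side gives a double sum over pairs $(n,m)$ with $n \leqslant x$, $(n,q)=1$, $m \leqslant x$, and $n \mid m$.

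Next I would interchange the order of summation to sum over $m \leqslant x$ first, and $n$ ranging over divisors of $m$ coprime to $q$. Since $n \mid m$ and $m \leqslant x$ automatically force $n \leqslant x$, the outer constraint $n \leqslant x$ can be dropped, giving
$$\sum_{\substack{n \leqslant x \\ (n,q)=1}} \mu(n) \left\lfloor \frac{x}{n} \right\rfloor = \sum_{m \leqslant x} \sum_{\substack{d \mid m \\ (d,q)=1}} \mu(d).$$

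At this point I would invoke \eqref{eq:infty}, which identifies the inner sum as $\mathbf{1}_q^\infty(m)$, so the right-hand side becomes
$$\sum_{m \leqslant x} \mathbf{1}_q^\infty(m) = \sum_{\substack{m \leqslant x \\ m \mid q^\infty}} 1,$$
which is exactly the claimed expression. There is no real obstacle here; the only point requiring care is noticing that the coprimality condition $(n,q)=1$ on the summation index of $\mu$ is precisely what triggers \eqref{eq:infty}, replacing the characteristic function of $\{1\}$ (which would appear in the classical identity $\sum_{n\leqslant x}\mu(n)\lfloor x/n\rfloor = 1$) by the characteristic function of integers whose prime divisors all lie among those of $q$.
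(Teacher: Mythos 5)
Your proposal is correct and follows essentially the same route as the paper: the paper's proof is exactly the interchange-of-summation identity $\sum_{n \leqslant x,\ (n,q)=1} \mu(n) \lfloor x/n \rfloor = \sum_{m \leqslant x} \sum_{d \mid m,\ (d,q)=1} \mu(d)$ followed by an appeal to \eqref{eq:infty}. You have merely written out the convolution step in more detail than the paper does.
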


\begin{proof}
This follows from \eqref{eq:infty} and the convolution identity
$$\sum_{\substack{n \leqslant x \\ (n,q)=1}} \mu(n) \left \lfloor \frac{x}{n} \right \rfloor = \sum_{n \leqslant x} \sum_{\substack{d \mid n \\ (d,q)=1}} \mu(d)$$
as asserted.
\end{proof}

Note that the sum of the left-hand side has also been investigated in \cite{gup} by a completely different method. 

\begin{lemma}
\label{le5}
Let $q \in \Z_{\geqslant 1}$. For any $x \in \R_{\geqslant 1}$
$$\sum_{\substack{n \leqslant x \\ q \mid n}} \mu(n) \sum_{\substack{k \leqslant x/n \\ k \mid n^\infty}} 1 = \sum_{\substack{n \leqslant x \\ q \mid \gamma(n)}} (-1)^{\omega(n)}.$$
\end{lemma}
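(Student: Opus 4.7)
The plan is to rewrite the left-hand side by the substitution $m = nk$, converting the double sum into a single sum indexed by $m$.

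First I would observe that since $\mu(n) = 0$ unless $n$ is squarefree, the outer sum on the left-hand side is effectively restricted to squarefree $n$ with $q \mid n$. For such $n$, the inner sum runs over $k \mid n^\infty$ with $k \leqslant x/n$. For any such pair $(n,k)$, set $m = nk$; since every prime of $k$ already divides $n$ and $n$ is squarefree, we have $\gamma(m) = \gamma(nk) = \gamma(n) = n$.

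The key step is to check that $(n,k) \mapsto nk$ is a bijection between
$$\left\{(n,k) \in \Z_{\geqslant 1}^2 : n \text{ squarefree},\ q \mid n,\ k \mid n^\infty,\ nk \leqslant x\right\}$$
and
$$\left\{m \in \Z_{\geqslant 1} : m \leqslant x,\ q \mid \gamma(m)\right\},$$
the inverse being $m \mapsto \bigl(\gamma(m),\, m/\gamma(m)\bigr)$. To verify this, one notes that $\gamma(m)$ is squarefree and divides $m$, the cofactor $m/\gamma(m)$ has all its prime factors among those of $\gamma(m)$ (so the condition $k \mid n^\infty$ holds), and the condition $q \mid n$ corresponds exactly to $q \mid \gamma(m)$. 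Conversely, if $(n,k)$ lies in the first set then $\gamma(nk) = n$ forces $n = \gamma(m)$ and $k = m/\gamma(m)$.

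Under this bijection, $\mu(n) = \mu(\gamma(m)) = (-1)^{\omega(\gamma(m))} = (-1)^{\omega(m)}$, so the left-hand side becomes
$$\sum_{\substack{m \leqslant x \\ q \mid \gamma(m)}} (-1)^{\omega(m)},$$
which is the right-hand side after renaming $m$ to $n$. There is no substantive obstacle; the only care needed is to verify that the squarefree constraint on $n$ combined with $k \mid n^\infty$ forces $n$ to equal $\gamma(nk)$, which makes the map injective.
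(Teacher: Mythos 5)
Your proof is correct and is essentially the same argument as the paper's: the paper establishes the identity by showing that for each $m$ the only admissible divisor is $d=\gamma(m)$ (via the indicator $\mathbf{1}_d^\infty(m/d)$) and then interchanging the order of summation, which is precisely your bijection $(n,k)\mapsto nk$ with inverse $m\mapsto(\gamma(m),m/\gamma(m))$. The key observation in both cases is the unique factorization $m=\gamma(m)\cdot\bigl(m/\gamma(m)\bigr)$ with $\mu(\gamma(m))=(-1)^{\omega(m)}$.
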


\begin{proof}
We first prove that, for any $n \in \Z_{\geqslant 1}$ and any squarefree divisor $d$ of $n$
\begin{equation}
   \mathbf{1}_d^\infty \left( \tfrac{n}{d} \right) = \begin{cases} 1, & \textrm{if\ } d = \gamma(n) \\ 0, & \textrm{otherwise.} \end{cases} \label{eq:id3}
\end{equation}
The result is obvious if $n=1$. Assume $n \geqslant 2$ and let $d$ be a squarefree divisor of $n$. Then, from \eqref{eq:infty}
$$\mathbf{1}_d^\infty \left( \tfrac{n}{d} \right) = \sum_{\substack{\delta \mid \frac{n}{d} \\ (\delta,d)=1}} \mu(\delta) = \sum_{\delta \mid \frac{\gamma(n)}{d}} \mu(\delta) $$
implying \eqref{eq:id3}. Now, for any $n \in \Z_{\geqslant 1}$
$$\sum_{\substack{d \mid n \\ (n/d) \mid d^\infty \\ q \mid d}} \mu(d) = \sum_{\substack{d \mid n \\ d = \gamma(n) \\ q \mid d}} \mu(d) = \begin{cases} \mu \left( \gamma(n) \right), & \textrm{if\ } q \mid \gamma(n) \\ 0, & \textrm{otherwise} \end{cases} = \begin{cases} (-1)^{\omega(n)}, & \textrm{if\ } q \mid \gamma(n) \\ 0, & \textrm{otherwise.} \end{cases}$$
The asserted result then follows from
$$\sum_{\substack{n \leqslant x \\ q \mid \gamma(n)}} (-1)^{\omega(n)} = \sum_{n \leqslant x} \sum_{\substack{d \mid n \\ (n/d) \mid d^\infty \\ q \mid d}} \mu(d) = \sum_{\substack{d \leqslant x \\ q \mid d}} \mu(d) \sum_{\substack{k \leqslant x/d \\ k \mid d^\infty}} 1$$
as required.
\end{proof}

\begin{lemma}
\label{le6}
For any $k \in \Q^*$ and $n \in \Z_{\geqslant 1}$
$$\sum_{d \mid n} \mu(d)^2 k^{\omega(d)} = (k+1)^{\omega(n)}.$$
\end{lemma}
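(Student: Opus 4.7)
The plan is to exploit the fact that the factor $\mu(d)^2$ restricts the sum on the left to squarefree divisors of $n$, and then either invoke multiplicativity or use a direct combinatorial expansion.

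The quickest route I would take is combinatorial. A squarefree divisor $d$ of $n$ is determined uniquely by the subset of prime factors of $n$ it contains; if $d$ corresponds to a subset $S \subseteq \{p : p \mid n\}$, then $\omega(d) = |S|$. Since $\mu(d)^2$ is the indicator of squarefreeness, I would rewrite
\begin{equation*}
\sum_{d \mid n} \mu(d)^2 k^{\omega(d)} = \sum_{S \subseteq \{p : p \mid n\}} k^{|S|} = \sum_{j=0}^{\omega(n)} \binom{\omega(n)}{j} k^j,
\end{equation*}
and then apply the binomial theorem to conclude the right-hand side equals $(1+k)^{\omega(n)}$.

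As an alternative (and perhaps cleaner) presentation, I would note that $d \mapsto \mu(d)^2 k^{\omega(d)}$ is a multiplicative function of $d$, hence its summatory function $F(n) := \sum_{d \mid n} \mu(d)^2 k^{\omega(d)}$ is multiplicative in $n$. Both $F(n)$ and $(k+1)^{\omega(n)}$ are multiplicative, so it is enough to verify the identity on prime powers $n = p^a$. For such $n$ the only squarefree divisors are $1$ and $p$, so $F(p^a) = 1 + k = (k+1)^{\omega(p^a)}$, which finishes the proof.

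There is no real obstacle here; the only thing to be slightly careful about is that the hypothesis $k \in \Q^*$ is not actually needed for the identity as a polynomial identity in $k$, so I would mention that the formula in fact holds for any $k$ in any commutative ring. I would favor the combinatorial proof in the write-up, since it is a single line and makes the appearance of $(1+k)^{\omega(n)}$ completely transparent.
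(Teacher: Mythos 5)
Your proposal is correct, and your second argument (multiplicativity of $d \mapsto \mu(d)^2 k^{\omega(d)}$, hence of the divisor sum, plus verification on prime powers where the only squarefree divisors are $1$ and $p$) is exactly the proof the paper gives. The combinatorial version via subsets of prime divisors and the binomial theorem is an equally valid one-line alternative, and your remark that $k \in \Q^*$ is not needed is accurate, though harmless in context.
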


\begin{proof}
This is well-known. For instance, this can be checked for prime powers and then extended to all integers by multiplicativity.
\end{proof}

\begin{lemma}
\label{le7}
Let $q \in \Z_{\geqslant 1}$ squarefree. For any $k \in \Z_{\geqslant 1}$
$$\sum_{n_1 \dotsb n_k \mid q} \mu \left( n_1 \right)^2 \dotsb \mu \left( n_k \right)^2 = (k+1)^{\omega(q)}.$$
\end{lemma}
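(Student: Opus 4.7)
The plan is to convert this multilinear sum into a single sum over divisors of $q$ and then invoke Lemma~\ref{le6}. First I would note that, because $q$ is squarefree, every divisor of $q$ is also squarefree; in particular, whenever $n_1 \dotsb n_k \mid q$, each individual factor $n_i$ divides $q$ and is therefore squarefree, so $\mu(n_i)^2 = 1$. The weighted sum on the left-hand side thus collapses to the pure counting problem of ordered $k$-tuples of positive integers whose product divides $q$.

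Next I would regroup the sum according to $d := n_1 \dotsb n_k$. Denoting by $N_k(d)$ the number of ordered factorizations of $d$ into $k$ positive factors, this gives
$$\sum_{n_1 \dotsb n_k \mid q} \mu(n_1)^2 \dotsb \mu(n_k)^2 = \sum_{d \mid q} N_k(d).$$
For a squarefree integer $d$, the $k$ factors in any such factorization are necessarily pairwise coprime, so each of the $\omega(d)$ prime divisors of $d$ must be placed in exactly one of the $k$ factor slots, independently of the others; hence $N_k(d) = k^{\omega(d)}$.

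Finally, since every divisor $d$ of the squarefree integer $q$ satisfies $\mu(d)^2 = 1$, Lemma~\ref{le6} yields
$$\sum_{d \mid q} k^{\omega(d)} = \sum_{d \mid q} \mu(d)^2 k^{\omega(d)} = (k+1)^{\omega(q)},$$
which establishes the claim. I do not anticipate any genuine difficulty; the only substantive step is the combinatorial identification $N_k(d) = k^{\omega(d)}$ for squarefree $d$, which is an immediate consequence of the independence of the prime-distribution choices across the $k$ slots. A sanity check at $q = p$ and $q = p_1 p_2$ (giving $1 + k = k+1$ and $1 + 2k + k^2 = (k+1)^2$ respectively) confirms that no subtle overcounting occurs.
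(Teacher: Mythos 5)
Your proof is correct, but it takes a genuinely different route from the paper's. The paper proceeds by induction on $k$: writing $S_{k+1}(q)=\sum_{n\mid q}\mu(n)^2 S_k(q/n)$, it factors out $(k+1)^{\omega(q)}$ and then applies Lemma~\ref{le6} with the \emph{rational} parameter $\tfrac{1}{k+1}$ (which is why that lemma is stated for $k\in\Q^*$), obtaining $(k+1)^{\omega(q)}\left(\tfrac{1}{k+1}+1\right)^{\omega(q)}=(k+2)^{\omega(q)}$. You instead argue directly: the weights $\mu(n_i)^2$ are identically $1$ because every $n_i$ divides the squarefree integer $q$, the sum regroups as $\sum_{d\mid q}N_k(d)$ with $N_k(d)=\tau_k(d)=k^{\omega(d)}$ for squarefree $d$ by the prime-placement argument, and a single application of Lemma~\ref{le6} with the integer parameter $k$ finishes the job. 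Your version is more elementary in that it only needs Lemma~\ref{le6} for positive integers (indeed $\sum_{d\mid q}k^{\omega(d)}=\prod_{p\mid q}(1+k)$ could be checked directly, bypassing the lemma entirely), and it makes the combinatorial content of the identity transparent; the paper's induction is shorter on the page but leans on the $\Q^*$ form of Lemma~\ref{le6}. All steps in your argument check out, including the identification $N_k(d)=k^{\omega(d)}$, which is exactly the squarefree evaluation of the Piltz divisor function $\tau_k$.
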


\begin{proof}
Let $S_k(q)$ be the sum of the left-hand side. We proceed by induction on $k$, the case $k=1$ being Lemma~\ref{le6} with $k=1$. Suppose that the result is true for some $k \geqslant 1$. Then, using induction hypothesis and Lemma~\ref{le6}, we get
\begin{eqnarray*}
   S_{k+1}(q) &=& \sum_{n \mid q} \mu \left( n \right)^2 S_k \left( \frac{q}{n} \right) = \sum_{n \mid q} \mu \left( n \right)^2 \left( k+1 \right)^{\omega(q/n)} \\
   &=& (k+1)^{\omega(q)}\sum_{n \mid q} \mu \left( n \right)^2 \left( k+1 \right)^{-\omega \left( n \right) } \\
   &=& (k+1)^{\omega(q)} \left( \tfrac{1}{k+1} + 1 \right)^{\omega(q)} = (k+2)^{\omega(q)}
\end{eqnarray*}
completing the proof.
\end{proof}

\subsection{Proof of Theorem~\ref{th:id1}}

\noindent
From Lemma~\ref{le2}, we have
\begin{eqnarray*}
   & & \sum_{n_1 \leqslant x, \dotsc, n_r \leqslant x} \mu \left( n_1 \dotsb n_r \right) \left \lfloor \frac{x}{n_1 \dotsb n_r} \right \rfloor \\
   &=&  \sum_{n_1 \leqslant x, \dotsc, n_{r-1} \leqslant x} \mu \left( n_1 \dotsb n_{r-1} \right) \sum_{\substack{n_r \leqslant x/\left( n_1 \dotsb n_{r-1} \right) \\ \left( n_r, n_1 \dotsb n_{r-1} \right) = 1}} \mu \left( n_r \right) \left \lfloor \frac{x/\left( n_1 \dotsb n_{r-1} \right)}{n_r} \right \rfloor \\
   &=&  \sum_{n_1 \leqslant x, \dotsc, n_{r-1} \leqslant x} \mu \left( n_1 \dotsb n_{r-1} \right) \sum_{\substack{n_r \leqslant x/\left( n_1 \dotsb n_{r-1} \right) \\ n_r \mid \left( n_1 \dotsb n_{r-1} \right)^\infty}} 1.
\end{eqnarray*}

\noindent
The change of variable $m=n_1 \dotsb n_{r-1}$ yields
\begin{eqnarray*}
   \sum_{n_1 \leqslant x, \dotsc, n_r \leqslant x} \mu \left( n_1 \dotsb n_r \right) \left \lfloor \frac{x}{n_1 \dotsb n_r} \right \rfloor &=& \sum_{n_1 \leqslant x, \dotsc, n_{r-2} \leqslant x} \sum_{\substack{m \leqslant x \\ n_1 \dotsb n_{r-2} \mid m}} \mu(m) \sum_{\substack{n_r \leqslant x/m \\ n_r \mid m^\infty}} 1 \\
   &=& \sum_{n_1 \leqslant x, \dotsc, n_{r-2} \leqslant x} \sum_{\substack{m \leqslant x \\ n_1 \dotsb n_{r-2} \mid \gamma(m)}} (-1)^{\omega(m)}
\end{eqnarray*} 
where we used Lemma~\ref{le5} with $q=n_1 \dotsb n_{r-2}$. Now since $n_1 \dotsb n_{r-2} \mid \gamma(m)$ is equivalent to both $n_1 \dotsb n_{r-2} \mid \gamma(m)$ and $\mu \left (n_1 \right )^2 = \dotsb = \mu \left (n_{r-2} \right )^2 = 1$, we infer
\begin{eqnarray*}
   \sum_{n_1 \leqslant x, \dotsc, n_r \leqslant x} \mu \left( n_1 \dotsb n_r \right) \left \lfloor \frac{x}{n_1 \dotsb n_r} \right \rfloor &=& \sum_{m \leqslant x} (-1)^{\omega(m)} \sum_{\substack{n_1 \leqslant x, \dotsc, n_{r-2} \leqslant x \\ n_1 \dotsb n_{r-2} \mid \gamma(m)}} \mu \left (n_1 \right )^2 \dotsb  \mu \left (n_{r-2} \right )^2 \\
   &=& \sum_{m \leqslant x} (-1)^{\omega(m)} \sum_{n_1 \dotsb n_{r-2} \mid \gamma(m)} \mu \left (n_1 \right )^2 \dotsb  \mu \left (n_{r-2} \right )^2.
\end{eqnarray*} 
Lemma~\ref{le7} then gives
\begin{eqnarray*}
   \sum_{n_1 \leqslant x, \dotsc, n_r \leqslant x} \mu \left( n_1 \dotsb n_r \right) \left \lfloor \frac{x}{n_1 \dotsb n_r} \right \rfloor &=& \sum_{m \leqslant x} (-1)^{\omega(m)} (r-1)^{\omega \left( \gamma(m) \right)} \\
   &=& \sum_{m \leqslant x} (-1)^{\omega(m)} (r-1)^{\omega (m)} = \sum_{m \leqslant x} (1-r)^{\omega(m)}
\end{eqnarray*} 
as asserted.
\qed

\subsection{Proof of Corollaries~\ref{cor:id0} and~\ref{cor:id1}}

We start with an estimate which is certainly well-known, but we provide a proof for the sake of completeness. The unconditional estimate is quite similar to the usual bound given by the Selberg-Delange method (see \cite[Th\'{e}or\`{e}me~II.6.1]{ten} with $z \in \Z_{< 0}$).

\begin{lemma}
\label{lem:unitary}
Let $k \in \Z_{\geqslant 1}$. There exists an absolute constant $c_0 >0$ and a constant $c_k \geqslant 1$, depending on $k$, such that, for any $x \geqslant c_k$ sufficiently large
$$\sum_{n  \leqslant x} (-k)^{\omega(n)} \ll_k x e^{-c_0 (\log x)^{3/5} (\log \log x)^{-1/5}}.$$
Furthermore, the Riemann Hypothesis is true if and only if, for any $\varepsilon > 0$
$$\sum_{n  \leqslant x} (-k)^{\omega(n)} \ll_{k,\varepsilon} x^{1/2+\varepsilon}.$$
\end{lemma}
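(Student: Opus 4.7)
The plan is to study the Dirichlet series
$$F(s) := \sum_{n=1}^\infty \frac{(-k)^{\omega(n)}}{n^s} \qquad (\Re s > 1),$$
and feed it into the standard Perron-type machinery. Since $n \mapsto (-k)^{\omega(n)}$ is multiplicative with constant value $-k$ on every prime power, a short computation of the Euler product gives
$$F(s) = \prod_p \frac{1 - (k+1)p^{-s}}{1 - p^{-s}} = \frac{G(s)}{\zeta(s)^k}, \qquad G(s) := \prod_p \frac{1 - (k+1)p^{-s}}{(1-p^{-s})^{k+1}}.$$
A quick Taylor expansion shows that the $p$-th factor of $G$ equals $1 + O_k(p^{-2\Re s})$, so $G$ is analytic in $\Re s > 1/2$. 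Observe that the simple pole of $\zeta$ at $s=1$ turns into a zero of order $k$ of $F$ at $s=1$, so no main term will arise from a residue there.

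For the unconditional estimate, I would apply an effective form of Perron's formula and shift the vertical contour into the Vinogradov--Korobov zero-free region
$$\sigma > 1 - \frac{c}{(\log|t|)^{2/3}(\log\log|t|)^{1/3}},$$
using the classical bound $|\zeta(s)^{-1}| \ll (\log|t|)^{2/3}(\log\log|t|)^{1/3}$ valid there. No residue is picked up (by the observation above), and the usual optimization of the height $T$ yields the claimed bound
$$\sum_{n \leqslant x} (-k)^{\omega(n)} \ll_k x \exp\bigl(-c_0 (\log x)^{3/5}(\log\log x)^{-1/5}\bigr).$$
Equivalently, this is the conclusion of the Selberg--Delange method applied with complex parameter $z = -k$, as in \cite[Th\'{e}or\`{e}me~II.5.2]{ten}: the main terms are proportional to $1/\Gamma(-k-j)$ and therefore vanish identically for $k \in \Z_{\geqslant 1}$.

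For the RH equivalence, the forward direction is the same contour shift, now pushed all the way to $\Re s = 1/2 + \varepsilon$, using the conditional bound $|\zeta(s)^{-1}| \ll_\varepsilon |t|^\varepsilon$ there; this yields the claimed $\ll_\varepsilon x^{1/2+\varepsilon}$ estimate. For the converse, assuming the bound holds for every $\varepsilon>0$, Abel summation extends $F(s)$ to an analytic function on the half-plane $\Re s > 1/2$. Combined with the factorization $\zeta(s)^k = G(s)/F(s)$ and the fact that $G$ is analytic on this half-plane with zero set confined to the countable family of vertical lines $\Re s = \log(k+1)/\log p$, any hypothetical zero of $\zeta$ with real part exceeding $1/2$ would introduce a pole of $F$, contradicting its analyticity---paralleling the classical equivalence $M(x) = O(x^{1/2+\varepsilon}) \Leftrightarrow \mathrm{RH}$. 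I expect the main technical obstacle to be precisely this last step, namely excluding the possibility that hypothetical off-critical-line zeros of $\zeta$ happen to be absorbed by the discrete zero set of $G$; this relies on the rigidity of that zero set together with the functional-equation symmetry of the $\zeta$-zeros.
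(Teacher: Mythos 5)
Your proposal is correct and follows essentially the same route as the paper: factor the Dirichlet series of $(-k)^{\omega}$ as a negative power of $\zeta(s)$ times an Euler product analytic in a wider half-plane, apply an effective Perron formula with a contour shift into the Vinogradov--Korobov zero-free region (resp.\ to $\Re s=\tfrac12+\varepsilon$ under RH), and obtain the converse from the analytic continuation of the series for $\Re s>\tfrac12$. The one point where you diverge is that you explicitly flag the possibility that an off-line zero of $\zeta$ could be absorbed by a zero of $G$ --- a subtlety the paper's proof passes over in silence --- and your observation that the zeros of $G$ in $\Re s>\tfrac12$ are simple and confined to finitely many explicit vertical lines settles this for $k\geqslant 2$ and reduces $k=1$ to excluding $\zeta$-zeros at the isolated points $(\log 2+2\pi i m)/\log 3$, so if anything your write-up is the more scrupulous of the two.
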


\begin{proof}
Set $f_k:=(-k)^\omega$. If $L(s,f_k)$ is the Dirichlet series of $f_k$, then usual computations show that, for any $s = \sigma + it \in \C$ such that $\sigma > 1$
$$L(s,f_k) = \zeta(s)^{-k} \zeta(2s)^{-\frac{1}{2}k(k+1)} G_k(s)$$
where $G_k(s)$ is a Dirichlet series absolutely convergent in the half-plane $\sigma > \frac{1}{3}$. Set $c := \np{57.54}^{-1}$, $\kappa := 1 + \frac{1}{\log x}$,
$$T := e^{c (\log x)^{3/5}(\log \log x)^{-1/5}} \ \textrm{and} \ \alpha := \alpha(T) = c (\log T)^{-2/3} (\log \log T)^{-1/3}.$$
We use Perron's summation formula in the shape \cite[Corollary~2.2]{liu}, giving
\begin{eqnarray*}
   \sum_{n  \leqslant x} (-k)^{\omega(n)} &=& \frac{1}{2 \pi i} \int_{\kappa - iT}^{\kappa + iT} \frac{G_k(s)}{\zeta(s)^{k} \zeta(2s)^{\frac{1}{2}k(k+1)}} \frac{x^s}{s} \, \textrm{d}s \\
   & & {} + O \left( \sum_{x-x/\sqrt{T} < n \leqslant x+x/\sqrt{T}} \left |f_k(n) \right| + \frac{x^\kappa}{\sqrt{T}} \sum_{n=1}^\infty \frac{\left |f_k(n) \right|}{n^\kappa} \right).
\end{eqnarray*}
By \cite{for}, $\zeta(s)$ has no zero in the region $\sigma \geqslant 1 - c (\log |t|)^{-2/3} (\log \log |t|)^{-1/3}$ and $|t| \geqslant 3$, so that we may shift the line of integration to the left and apply Cauchy's theorem in the rectangle with vertices $\kappa \pm iT$, $1-\alpha \pm iT$. In this region
$$\zeta(s)^{-1} \ll \left (\log (|t|+2) \right )^{2/3} \left( \log \log (|t|+2) \right)^{1/3}.$$
Therefore, the contribution of the horizontal sides does not exceed
$$\ll xT^{-1} (\log T)^{2k/3} (\log \log T)^{k/3} $$
and the contribution of the vertical side is bounded by
$$\ll x^{1- \alpha} (\log T)^{1+2k/3} (\log \log T)^{k/3}.$$
Since $T \ll x^{1-\varepsilon}$, Shiu's theorem \cite{shi} yields
$$\sum_{x-x/\sqrt{T} < n \leqslant x+x/\sqrt{T}} \left |f_k(n) \right| \leqslant \sum_{x-x/\sqrt{T} < n \leqslant x+x/\sqrt{T}} \tau_k (n) \ll \frac{x}{\sqrt{T}} (\log x)^{k-1}.$$
With the choice of $\kappa$, the $2$nd error term does not exceed
$$\leqslant \frac{x^\kappa}{\sqrt{T}} \sum_{n=1}^\infty \frac{\tau_k (n)}{n^\kappa} = \frac{x^\kappa}{\sqrt{T}} \zeta(\kappa)^k \ll \frac{x}{\sqrt{T}} (\log x)^k.$$
Since the path of integration does not surround the origin, nor the poles of the integrand, Cauchy's theorem and the choice of $T$ give the asserted estimate for any $c_0 \leqslant \frac{1}{4}c$ and any real number $x$ satisfying $x \geqslant \exp \left( c_1 k^{10/3} \right)$, say, where $c_1 \geqslant 1$ is absolute. 
\item[]
Now let $x,T \geqslant 2$, with $x$ large and $T \leqslant x^2$. If the Riemann Hypothesis is true, then by Perron's formula again
$$\sum_{n  \leqslant x} (-k)^{\omega(n)} = \frac{1}{2 \pi i} \int_{2 - iT}^{2 + iT} \frac{G_k(s)}{\zeta(s)^{k} \zeta(2s)^{\frac{1}{2}k(k+1)}} \frac{x^s}{s} \, \textrm{d}s + O_{k,\varepsilon} \left( \frac{x^{2+\varepsilon}}{T} \right).$$
We shift the line $\sigma = 2$ to the line $\sigma = \frac{1}{2} + \varepsilon$. In the rectangle with vertices $2 \pm iT$, $\frac{1}{2} + \varepsilon \pm iT$, the Riemann Hypothesis implies that $\zeta(s)^{-1} \ll |t|^{\varepsilon/k}$, so that similar argument as above yields
$$\sum_{n  \leqslant x} (-k)^{\omega(n)} \ll_{k,\varepsilon} x^\varepsilon \left( x^2 T^{-1} + x^{1/2} \right)$$
and the choice of $T=x^2$ gives the asserted estimate. On the other hand, if 
$$\sum_{n  \leqslant x} (-k)^{\omega(n)} \ll_{k,\varepsilon} x^{1/2+\varepsilon}$$
then the series $L(s,f_k)$ converges for $\sigma > \frac{1}{2}$, and then defines an analytic function in this half-plane. Hence $\zeta(s)$ does not vanish in this region, and the Riemann Hypothesis holds. 
\end{proof}

Corollary~\ref{cor:id0} is a direct consequence of Theorem~\ref{th:id1} and Lemma~\ref{lem:unitary}. The proof of Corollary~\ref{cor:id1} uses the following combinatorial identity.

\begin{lemma}
\label{lem:combinatoric}
Let $r \in \Z_{\geqslant 2}$. For any $x \in \R_{\geqslant 1}$
\begin{eqnarray*}
   & & \sum_{1 \leqslant n_1 < \dotsb < n_r \leqslant x} \mu \left( n_1 \dotsb n_r \right) \left \lfloor \frac{x}{n_1 \dotsb n_r} \right \rfloor = \frac{1}{r!} \sum_{n_1 ,\dotsc ,n_r \leqslant x} \mu \left( n_1 \dotsb n_r \right) \left \lfloor \frac{x}{n_1 \dotsb n_r} \right \rfloor \\
   & & {} - \frac{1}{r!}\sum_{j=2}^{r-2} (-1)^j (j-1) {r \choose j} \sum_{n \leqslant x} (1-r+j)^{\omega(n)} - \frac{(-1)^r \lfloor x \rfloor}{r(r-2)!} + \frac{(-1)^{r}(r-2)}{(r-1)!} .
\end{eqnarray*}
\end{lemma}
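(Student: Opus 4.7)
The plan is to introduce the auxiliary sums
$$W_k := \sum_{2 \leqslant n_1 < \dotsb < n_k \leqslant x} \mu(n_1 \dotsb n_k) \left\lfloor \frac{x}{n_1 \dotsb n_k} \right\rfloor \qquad (k \geqslant 1),$$
to set $W_0 := \lfloor x \rfloor$, and to reduce the identity to a clean inversion of a linear relation between the $W_k$'s and the unrestricted sums $U_r := \sum_{n_1, \dotsc, n_r \leqslant x} \mu(n_1 \dotsb n_r) \lfloor x/(n_1 \dotsb n_r) \rfloor$. Denote the left-hand side of the lemma by $V_r$.

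The key observation is that $\mu(n_1 \dotsb n_r) \neq 0$ forces $n_1 \dotsb n_r$ to be squarefree, which in particular makes the $n_i$ pairwise coprime; hence any two coordinates $n_i = n_j$ with $i \neq j$ must both equal $1$. Therefore every non-vanishing tuple in $U_r$ consists, for some $0 \leqslant j \leqslant r$, of $j$ copies of $1$ together with $r - j$ distinct values $\geqslant 2$, and admits exactly $r!/j!$ orderings. Summing by multiset type yields
$$U_r = \sum_{k=0}^{r} \frac{r!}{(r-k)!}\, W_k,$$
while splitting $V_r$ according to whether $n_1 = 1$ or $n_1 \geqslant 2$ gives immediately $V_r = W_r + W_{r-1}$.

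Next, I invert the upper-triangular system above through exponential generating functions. With $u(y) := \sum_{r \geqslant 0} U_r y^r/r!$ and $w(y) := \sum_{k \geqslant 0} W_k y^k$, the relation reads $u(y) = w(y)\, e^y$, so $w(y) = u(y)\, e^{-y}$, which gives
$$W_r = \sum_{k=0}^{r} \frac{(-1)^{r-k}\, U_k}{k!\,(r-k)!}.$$
Adding the expressions for $W_r$ and $W_{r-1}$ and reindexing by $i := r - k$, the coefficient of $U_{r-i}$ telescopes to $(-1)^{i+1}(i-1)\binom{r}{i}/r!$, which already vanishes at $i = 1$, and the $i = 0$ term contributes $U_r/r!$. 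I then isolate the boundary indices $i = r-1$ and $i = r$ using $U_1 = \sum_{n \leqslant x} \mu(n)\lfloor x/n \rfloor = 1$ and $U_0 = \lfloor x \rfloor$, and invoke Theorem~\ref{th:id1} to rewrite each surviving $U_{r-i}$ (for $2 \leqslant i \leqslant r - 2$) as $\sum_{n \leqslant x} (1 - r + i)^{\omega(n)}$. The two elementary identities $r(r-2)/r! = (r-2)/(r-1)!$ and $(r-1)/r! = 1/(r(r-2)!)$ cast the two boundary contributions into precisely the form stated.

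The main obstacle is not conceptual but a careful bookkeeping of signs, binomial coefficients, and factorials through the reindexing step and through the extraction of the four boundary indices $i \in \{0, 1, r-1, r\}$; no analytic input beyond Theorem~\ref{th:id1} enters the argument.
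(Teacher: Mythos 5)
Your proof is correct, and it reaches the identity by a genuinely different --- and in one respect tighter --- route than the paper. The paper sets $u(n_1,\dotsc,n_r)=\mu(n_1\dotsb n_r)\lfloor x/(n_1\dotsb n_r)\rfloor$ and asserts, ``by a sieving argument,'' the relation $\sum_{n_1,\dotsc,n_r\leqslant x}u=r!\sum_{n_1<\dotsb<n_r}u+\sum_{j=2}^{r}(-1)^{j}(j-1)\binom{r}{j}\sum u(n_j,\dotsc,n_j,n_{j+1},\dotsc,n_r)$, so the coefficients $(-1)^{j}(j-1)\binom{r}{j}$ appear ready-made; it then evaluates each diagonal sum by noting that $\mu(n_j^{\,j}\dotsb)=0$ unless $n_j=1$ and applies Theorem~\ref{th:id1}, with the $j=r-1$ and $j=r$ cases giving $1$ and $\lfloor x\rfloor$. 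You instead make the vanishing observation the \emph{starting} point: a tuple survives only if all its repeated entries equal $1$, which gives the exact triangular system $U_r=\sum_{k}\frac{r!}{(r-k)!}W_k$, and you recover the same coefficients by inverting it with exponential generating functions and using $V_r=W_r+W_{r-1}$; your boundary terms (via $U_1=1$, $U_0=\lfloor x\rfloor$, and the two factorial identities you cite) agree exactly with the lemma. What your route buys is rigor: the paper's sieve formula is \emph{not} a valid identity for a general symmetric $u$ once $r\geqslant 4$ --- tuples with two distinct repeated values, such as $(a,a,b,b)$ with $a\neq b$, are overcounted by the $j=2$ term --- and it holds here only because every such tuple contributes $0$, a fact the paper exploits implicitly but never isolates. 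Your multiset decomposition makes that vanishing explicit before any counting is done, so nothing is asserted without proof; the only other input in both arguments is Theorem~\ref{th:id1}.
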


\begin{proof}
Set $u\left( n_1 ,\dotsc ,n_r \right) := \mu \left( n_1 \dotsb n_r \right) \left \lfloor \frac{x}{n_1 \dotsb n_r} \right \rfloor$. Since $u$ is symmetric with respect to the $r$ variables $n_1 ,\dotsc ,n_r$, multiplying the left-hand side by $r!$ amounts to summing in the hypercube $\left[ 1,x \right]^r$, but we must take the diagonals into account. By a sieving argument, we get
\begin{eqnarray*}
   \sum_{n_1 ,\dotsc ,n_r \leqslant x}  u\left( n_1 ,\dotsc ,n_r \right) &=& r! \sum_{1 \leqslant n_1 < \dotsb < n_r \leqslant x} u\left( n_1 ,\dotsc ,n_r\right) \\
   & & {}+ \sum_{j=2}^r (-1)^{j} (j-1) {r \choose j} \sum_{n_j,\dotsc,n_r \leqslant x} u \left( n_j, \dotsc, n_j,n_{j+1}, \dotsc, n_r \right)
\end{eqnarray*}
where the variable $n_j$ appears $j$ times in the inner sum. Now since
$$u \left( n_j, \dotsc, n_j,n_{j+1}, \dotsc, n_r \right) = \mu \left( n_j^j n_{j+1} \dotsb n_r \right) \left \lfloor \frac{x}{n_j^j n_{j+1} \dotsb n_r} \right \rfloor$$
so that $u \left( n_j, \dotsc, n_j,n_{j+1}, \dotsc, n_r \right) = 0$ as soon as $n_j > 1$ since $j \geqslant 2$, we infer that the inner sum is
$$= \left\lbrace \begin{array}{rcll} \displaystyle \sum_{n_{j+1}, \dotsc, n_r \leqslant x} \mu \left( n_{j+1} \dotsb n_r \right) \left \lfloor \frac{x}{n_{j+1} \dotsb n_r} \right \rfloor &=& \displaystyle \sum_{n \leqslant x} (1-r+j)^{\omega(n)} & \textrm{if\ } 2 \leqslant j \leqslant r-2 \\ & & & \\ \displaystyle \sum_{n_{r-1}, n_r \leqslant x} \mu \left( n_{r-1}^{r-1} n_r \right) \left \lfloor \frac{x}{n_{r-1}^{r-1} n_r } \right \rfloor &=& 1 & \textrm{if\ } j=r-1 \\ & & & \\ \displaystyle \sum_{n_r \leqslant x} \mu \left( n_r^r \right) \left \lfloor \frac{x}{n_r^r } \right \rfloor &=& \lfloor x  \rfloor & \textrm{if\ } j=r
\end{array} \right.$$
where we used Theorem~\ref{th:id1} when $2 \leqslant j \leqslant r-2$, implying the asserted result.
\end{proof}

Corollary~\ref{cor:id1} follows immediately from Theorem~\ref{th:id1}, Lemmas~\ref{lem:unitary} and~\ref{lem:combinatoric}.
\qed

\section{Proof of Theorem~\ref{th:id15}}

\subsection{Lemmas}

\begin{lemma}
\label{le8}
Let $q \in \Z_{\geqslant 1}$. For any $x \in \R_{\geqslant 1}$
$$\sum_{\substack{n \leqslant x \\ (n,q)=1}} \mu(n)^2 \left \lfloor \frac{x}{n} \right \rfloor = \sum_{\substack{b \leqslant x \\ b \mid q^\infty}} \sum_{\substack{a \leqslant x/b \\ (a,q)=1}} 2^{\omega(a)}.$$
\end{lemma}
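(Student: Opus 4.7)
The plan is to swap the order of summation on the left-hand side and then reindex using the coprime factorization of an integer with respect to $q$.

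First I would open up the floor function in the standard way,
$$\sum_{\substack{n \leqslant x \\ (n,q)=1}} \mu(n)^2 \left\lfloor \frac{x}{n} \right\rfloor = \sum_{\substack{n \leqslant x \\ (n,q)=1}} \mu(n)^2 \sum_{m \leqslant x/n} 1 = \sum_{N \leqslant x} \ \sum_{\substack{n \mid N \\ (n,q)=1}} \mu(n)^2,$$
so the inner sum over $n$ counts the number of squarefree divisors of $N$ that are coprime to $q$.

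Next I would write each $N \in \Z_{\geqslant 1}$ uniquely as $N = ab$, where $b$ is the largest divisor of $N$ all of whose prime factors divide $q$ (so $b \mid q^\infty$), and $a = N/b$ is then coprime to $q$. Because the prime factorizations of $a$ and $b$ use disjoint sets of primes, every squarefree divisor of $N$ factors uniquely as a product of a squarefree divisor of $a$ and a squarefree divisor of $b$; the ones coprime to $q$ are precisely those whose $b$-part equals $1$, i.e.\ the squarefree divisors of $a$. Hence
$$\sum_{\substack{n \mid N \\ (n,q)=1}} \mu(n)^2 = 2^{\omega(a)}.$$

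Finally I would reparameterize the outer sum using the bijection $N \leftrightarrow (a,b)$. The condition $N \leqslant x$ becomes $ab \leqslant x$, so
$$\sum_{N \leqslant x} 2^{\omega(a)} = \sum_{\substack{b \leqslant x \\ b \mid q^\infty}} \ \sum_{\substack{a \leqslant x/b \\ (a,q)=1}} 2^{\omega(a)},$$
which is exactly the right-hand side. The only delicate point is the bijective reparameterization $N \mapsto (a,b)$; once one checks that this factorization exists and is unique (a direct consequence of unique factorization, since $a$ and $b$ use disjoint primes), the identification of the inner sum with $2^{\omega(a)}$ is immediate and the identity follows.
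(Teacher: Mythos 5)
Your proposal is correct and follows essentially the same route as the paper: both open the floor function to get $\sum_{N\leqslant x}\sum_{n\mid N,\,(n,q)=1}\mu(n)^2$, evaluate the inner divisor sum as $2^{\omega(a)}$ via the unique factorization $N=ab$ with $b\mid q^\infty$ and $(a,q)=1$, and then reindex the outer sum over the pairs $(a,b)$. The paper states this very tersely (by reference to the proof of Lemma~\ref{le2}), so your write-up simply supplies the details it leaves implicit.
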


\begin{proof}
The proof is similar to that of Lemma~\ref{le2} except that we replace \eqref{eq:infty} by the convolution identity
$$\sum_{\substack{d \mid n \\ (d,q)=1}} \mu(d)^2 = 2^{\omega(a)}$$
where $n=ab$, $(a,b)=(a,q)=1$ and $b \mid q^\infty$.
\end{proof}

Our next result is the analogue of Lemma~\ref{le5}.

\begin{lemma}
\label{le9}
Let $q \in \Z_{\geqslant 1}$. For any $x \in \R_{\geqslant 1}$
$$\sum_{\substack{d \leqslant x \\ q \mid d}} \mu(d)^2 \sum_{\substack{k \leqslant x/d \\ k \mid d^\infty}} \ \sum_{\substack{h \leqslant x/(kd) \\ (h,d)=1}} 2 ^{\omega(h)} = \sum_{h \leqslant x} 2 ^{\omega(h)} \sum_{\substack{n \leqslant x/h \\ q \mid \gamma(n) \\ \left (h,\gamma(n) \right ) = 1}} 1.$$
\end{lemma}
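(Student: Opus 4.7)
The plan is to mirror the proof of Lemma~\ref{le5}, using the identity \eqref{eq:id3} established in its proof as the main combinatorial input. The extra weight $2^{\omega(h)}$ in the inner sum on the left-hand side does not interact with the rearrangement and will simply be carried along.

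First, I would interchange the order of the two outer sums via the substitution $m=kd$, rewriting the left-hand side as
$$\sum_{m\leqslant x}\ \sum_{\substack{d\mid m,\ q\mid d\\ d\ \text{squarefree}\\ (m/d)\mid d^{\infty}}}\mu(d)^{2}\sum_{\substack{h\leqslant x/m\\ (h,d)=1}}2^{\omega(h)}.$$
For fixed $m$, identity \eqref{eq:id3} says that a squarefree divisor $d$ of $m$ satisfies $(m/d)\mid d^{\infty}$ if and only if $d=\gamma(m)$; combined with $q\mid d$, the middle sum collapses to the single term $d=\gamma(m)$ when $q\mid\gamma(m)$, and is empty otherwise. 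Since $\gamma(m)$ is always squarefree, $\mu(\gamma(m))^{2}=1$, so the left-hand side reduces to
$$\sum_{\substack{m\leqslant x\\ q\mid\gamma(m)}}\ \sum_{\substack{h\leqslant x/m\\ (h,\gamma(m))=1}}2^{\omega(h)}.$$

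Second, I would swap the remaining double sum over $m$ and $h$: the joint constraint $mh\leqslant x$ allows either $h\leqslant x/m$ or $m\leqslant x/h$ as the inner condition, and the coprimality and kernel-divisibility constraints depend only on $m$, so moving $2^{\omega(h)}$ outside gives, after relabeling $m$ as $n$, exactly
$$\sum_{h\leqslant x}2^{\omega(h)}\sum_{\substack{n\leqslant x/h\\ q\mid\gamma(n)\\ (h,\gamma(n))=1}}1,$$
which is the right-hand side.

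I do not anticipate a serious obstacle, since the argument is essentially that of Lemma~\ref{le5} with the harmless weight $2^{\omega(h)}$ appended. The only point worth double-checking is that the collapse via \eqref{eq:id3} still functions correctly in the presence of the divisibility condition $q\mid d$; this is immediate because on squarefree $d$ the conjunction of $d=\gamma(m)$ and $q\mid d$ is equivalent to $q\mid\gamma(m)$.
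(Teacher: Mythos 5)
Your proof is correct and follows essentially the same route as the paper's: both rest on the collapse identity \eqref{eq:id3} from the proof of Lemma~\ref{le5} together with an interchange of summation, with the weight $2^{\omega(h)}$ carried along passively. The only difference is the order of the two steps (you collapse the $d,k$-sums to $d=\gamma(m)$ first and then move the $h$-sum outside, whereas the paper moves $h$ outside first and then collapses), which is immaterial.
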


\begin{proof}
The sum of the left-hand side is equal to
\begin{equation}
   \sum_{h \leqslant x} 2 ^{\omega(h)} \sum_{\substack{d \leqslant x/h \\ q \mid d \\ (d,h)=1}} \mu(d)^2 \sum_{\substack{k \leqslant x/(hd) \\ k \mid d^\infty}} 1. \label{eq:le9}
\end{equation}
Now as in the proof of Lemma~\ref{le5}, we derive
$$\sum_{\substack{d \mid n \\ (n/d) \mid d^\infty \\ q \mid d \\ (h,d) = 1}} \mu(d)^2 = \begin{cases} 1, & \textrm{if\ } q \mid \gamma(n) \ \textrm{and\ } \left( h, \gamma(n) \right) = 1 \\ 0, & \textrm{otherwise} \end{cases}$$
so that the inner sum of the right-hand side of the lemma is
$$\sum_{\substack{n \leqslant x/h \\ q \mid \gamma(n) \\ \left (h,\gamma(n) \right ) = 1}} 1 = \sum_{n \leqslant x/h} \sum_{\substack{d \mid n \\ (n/d) \mid d^\infty \\ q \mid d \\ (h,d) = 1}} \mu(d)^2 = \sum_{\substack{d \leqslant x/h \\ q \mid d \\(h,d)=1}} \mu(d)^2 \sum_{\substack{k \leqslant x/(hd) \\ k \mid d^\infty}} 1$$
and inserting this sum in \eqref{eq:le9} gives the asserted result.
\end{proof}

\begin{lemma}
\label{le10}
Let $r \in \Z_{\geqslant 1}$. For any $x \in \R_{\geqslant 1}$
$$\sum_{m \leqslant x} 2^{\omega(m)} \sum_{\substack{n \leqslant x/m \\ \left( m, \gamma(n) \right) = 1}} r^{\omega(n)} = \sum_{m \leqslant x} (r+2)^{\omega(m)}.$$
\end{lemma}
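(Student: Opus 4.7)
The plan is to recognize the inner sum as a unitary convolution. First I would note that $(m,\gamma(n))=1$ is equivalent to $(m,n)=1$, because $\gamma(n)$ has exactly the same prime divisors as $n$. Introducing the variable $N=mn$ and interchanging the order of summation, the left-hand side becomes
\[
\sum_{N \leqslant x} \ \sum_{\substack{mn=N \\ (m,n)=1}} 2^{\omega(m)}\, r^{\omega(n)}.
\]

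The next step is to evaluate the inner sum prime by prime. A pair $(m,n)$ with $mn=N$ and $(m,n)=1$ is determined uniquely by the subset $S$ of prime divisors of $N$ assigned to $m$ --- explicitly, $m=\prod_{p \in S}p^{v_p(N)}$ and $n=N/m$. Under this bijection $\omega(m)=|S|$ and $\omega(n)=\omega(N)-|S|$, so that
\[
\sum_{\substack{mn=N \\ (m,n)=1}} 2^{\omega(m)}\, r^{\omega(n)}
 \;=\; \sum_{S \subseteq \{p\,:\,p \mid N\}} 2^{|S|}\, r^{\omega(N)-|S|}
 \;=\; (2+r)^{\omega(N)}
\]
by the binomial theorem applied separately at each prime divisor of $N$. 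Summing over $N \leqslant x$ then yields the right-hand side.

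There is no serious obstacle here: the only substantive observation is that $(m,\gamma(n))=1$ collapses to $(m,n)=1$, after which multiplicativity over unitary factorizations of $N$ does all the work. The identity is an essentially Euler-product style computation.
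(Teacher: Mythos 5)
Your proof is correct and follows essentially the same route as the paper: both arguments group the double sum by $N=mn$ and evaluate the resulting unitary convolution of $2^{\omega}$ and $r^{\omega}$, the paper by noting the convolution is multiplicative with value $r+2$ at prime powers, you by the equivalent subset--binomial computation. Your observation that $(m,\gamma(n))=1$ is the same as $(m,n)=1$ is the key point and is exactly what the paper's condition $\left(d,\gamma(n/d)\right)=1$ encodes.
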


\begin{proof}
Define
$$\varphi_r(n) := \sum_{\substack{d \mid n \\ \left( d,\gamma(n/d) \right) = 1}} 2^{\omega(d)} r^{\omega(n/d)}$$
so that the sum of the left-hand side is
$$\sum_{n \leqslant x} \varphi_r(n).$$
The lemma follows by noticing that the function $\varphi_r$ is multiplicative and that $\varphi_r \left( p^\alpha \right) = r+2$ for all prime powers $p^\alpha$.
\end{proof}

\subsection{Proof of Theorem~\ref{th:id15}}

Let $S_r(x)$ be the sum of the theorem. By Lemma~\ref{le8} and the change of variable $m=n_1 \dotsb n_{r-1}$, we get
\begin{eqnarray*}
   S_r(x) &=& \sum_{n_1 \leqslant x, \dotsc, n_{r-1} \leqslant x} \mu \left( n_1 \dotsb n_{r-1} \right)^2 \sum_{\substack{n_r \leqslant x /(n_1 \dotsb n_{r-1}) \\ \left( n_r, n_1 \dotsb n_{r-1} \right) = 1}} \mu \left( n_r \right)^2 \left \lfloor \frac{x/(n_1 \dotsb n_{r-1})}{n_r} \right \rfloor \\
   &=& \sum_{n_1 \leqslant x, \dotsc, n_{r-1} \leqslant x} \mu \left( n_1 \dotsb n_{r-1} \right)^2 \sum_{\substack{n_r \leqslant x /(n_1 \dotsb n_{r-1}) \\ n_r \mid \left( n_1 \dotsb n_{r-1} \right)^\infty}} \ \sum_{\substack{n_{r+1} \leqslant x / (n_1 \dotsc n_r) \\ \left( n_{r+1}, n_1 \dotsb n_{r-1} \right) = 1}} 2^{\omega \left( n_{r+1} \right)} \\
   &=& \sum_{n_1 \leqslant x, \dotsc, n_{r-2} \leqslant x} \sum_{\substack{m \leqslant x \\ n_1 \dotsb n_{r-2} \mid m}} \mu(m)^2 \sum_{\substack{n_r \leqslant x/m \\ n_r \mid m^\infty}} \quad \sum_{\substack{n_{r+1} \leqslant x/(m n_r) \\ \left( n_{r+1}, m \right) = 1}} 2^{\omega \left( n_{r+1} \right)}.
\end{eqnarray*}
Now Lemma~\ref{le9} yields
\begin{eqnarray*}
   S_r(x) &=& \sum_{n_1 \leqslant x, \dotsc, n_{r-2} \leqslant x} \ \sum_{m \leqslant x} 2^{\omega(m)} \ \sum_{\substack{n \leqslant x/m \\ n_1 \dotsb n_{r-2} \mid \gamma(n) \\ \left( m, \gamma(n) \right) = 1}} 1 \\
   &=& \sum_{m \leqslant x} 2^{\omega(m)} \ \sum_{\substack{n \leqslant x/m \\ \left( m, \gamma(n) \right) = 1}} \ \sum_{\substack{n_1 \leqslant x, \dotsc, n_{r-2} \leqslant x \\ n_1 \dotsb n_{r-2} \mid \gamma(n)}} 1 \\
   &=& \sum_{m \leqslant x} 2^{\omega(m)} \ \sum_{\substack{n \leqslant x/m \\ \left( m, \gamma(n) \right) = 1}} \ \sum_{n_1 \dotsb n_{r-2} \mid \gamma(n)} \mu \left( n_1 \right)^2 \dotsb \mu \left( n_{r-2} \right)^2 \\
\end{eqnarray*}
and Lemmas~\ref{le7} and~\ref{le10} imply that
\begin{eqnarray*}
   S_r(x) &=& \sum_{m \leqslant x} 2^{\omega(m)} \ \sum_{\substack{n \leqslant x/m \\ \left( m, \gamma(n) \right) = 1}} (r-1)^{\omega \left( \gamma(n) \right)} \\
   &=& \sum_{m \leqslant x} 2^{\omega(m)} \ \sum_{\substack{n \leqslant x/m \\ \left( m, \gamma(n) \right) = 1}} (r-1)^{\omega \left( n \right)} \\
   &=& \sum_{m \leqslant x} (r+1)^{\omega(m)}
\end{eqnarray*}
as required.
\qed

\section{Proof of Theorem~\ref{th:id2}}

\noindent
This identity is a consequence of the following more general result.

\begin{theorem}
\label{th:csq-id2}
Let $f : \Z_{\geqslant 1} \longrightarrow \C$ be an arithmetic function and set
$$S_{r}(x)=\sum_{n_{1}\leqslant x,...,n_{r}\leqslant x}\left( f \left( n_1 \right) + \dotsb + f \left( n_r \right)  \right) \left \lfloor \frac{x}{n_1 \dotsb n_r}\right \rfloor \quad \left( x \geqslant 1 \right).$$
Let $(T_r(x))$ be the sequence recursively defined by
$$T_{1}(x)=\sum_{n \leqslant x} \left \lfloor \frac{x}{n} \right \rfloor \left(f \star \mathbf{1}\right) (n) \quad \text{and} \quad T_{r}(x) = \sum_{n \leqslant x} T_{r-1} \left( \frac{x}{n} \right) \quad \left( r \in \Z_{\geqslant 2} \right).$$
Then, for any $r \in \Z_{\geqslant 2}$ and $x \in \R_{\geqslant 1}$
$$S_{r}(x)=r T_{r-1}(x).$$
\end{theorem}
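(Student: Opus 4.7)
The plan is to exploit the symmetry of $S_r(x)$ in the variables $n_1,\dotsc,n_r$. Since the factor $\lfloor x/(n_1\dotsb n_r)\rfloor$ depends only on the product $n_1\dotsb n_r$, renaming variables shows that each of the $r$ terms $f(n_j)\lfloor x/(n_1\dotsb n_r)\rfloor$ contributes the same amount to $S_r(x)$, hence
$$S_r(x) = r \sum_{n_1\leqslant x,\dotsc,n_r \leqslant x} f(n_1) \left\lfloor \frac{x}{n_1\dotsb n_r} \right\rfloor =: r\, U_r(x).$$
The theorem therefore reduces to proving $U_r(x) = T_{r-1}(x)$.

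The main step is to rewrite $U_r(x)$ using the identity $\lfloor y/N \rfloor = \#\{k\in\Z_{\geqslant 1} : kN \leqslant y\}$. After fixing $n_1$, the inner sum over $n_2,\dotsc,n_r$ counts the $r$-tuples $(k,n_2,\dotsc,n_r)\in\Z_{\geqslant 1}^r$ with $k n_2\dotsb n_r\leqslant x/n_1$; the constraints $n_j\leqslant x$ are harmless since the floor already vanishes whenever $n_j > x/n_1$. Parameterising by the product $m=kn_2\dotsb n_r$ yields
$$U_r(x) = \sum_{n_1\leqslant x} f(n_1) \sum_{m\leqslant x/n_1} \tau_r(m).$$

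Next I would prove by induction on $r\geqslant 1$ the formula
$$T_r(x) = \sum_{d\leqslant x} f(d) \sum_{L\leqslant x/d} \tau_{r+1}(L).$$
For $r=1$, expand $(f\star \mathbf{1})(n)=\sum_{d\mid n}f(d)$, set $n=dm$, and use the classical $\sum_{m\leqslant y}\lfloor y/m\rfloor=\sum_{L\leqslant y}\tau_2(L)$. For the inductive step, substitute the hypothesis into $T_r(x)=\sum_{n\leqslant x} T_{r-1}(x/n)$, interchange summations, and invoke the convolution identity $\sum_{nL\leqslant y}\tau_r(L)=\sum_{M\leqslant y}\tau_{r+1}(M)$ (equivalent to $\mathbf{1}\star\tau_r=\tau_{r+1}$). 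Applying the formula with $r$ replaced by $r-1$ matches the expression derived for $U_r(x)$, giving $T_{r-1}(x)=U_r(x)$ and therefore $S_r(x)=r\,T_{r-1}(x)$.

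No step presents a serious obstacle; the only point requiring care is the bookkeeping of summation ranges, but this is automatic because $\lfloor x/(n_1\dotsb n_r)\rfloor$ vanishes as soon as $n_1\dotsb n_r > x$, so the hypercube restrictions $n_j\leqslant x$ may be freely traded for joint conditions on the products that appear, legitimising every interchange used above.
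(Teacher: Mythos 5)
Your proof is correct. It shares the paper's first move -- symmetrising to extract the factor $r$ and reduce to a single copy of $f$ -- but then diverges: the paper unfolds the recursive definition of $T_{r-1}(x)$ into nested sums (its identity \eqref{eq:id2}) and matches them against $S_r(x)/r$ by a single Dirichlet-hyperbola swap, $\sum_{a\leqslant y} f(a)\sum_{b\leqslant y/a}\lfloor y/(ab)\rfloor=\sum_{c\leqslant y}\lfloor y/c\rfloor\,(f\star\mathbf{1})(c)$, so that $T_{r-1}$ is never evaluated in closed form. You instead compute both sides explicitly: the lattice-point count giving $U_r(x)=\sum_{n_1\leqslant x}f(n_1)\sum_{m\leqslant x/n_1}\tau_r(m)$, and an induction showing $T_r(x)=\sum_{d\leqslant x}f(d)\sum_{L\leqslant x/d}\tau_{r+1}(L)$ via $\mathbf{1}\star\tau_r=\tau_{r+1}$. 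The two routes are of comparable length and both entirely elementary; yours has the small bonus of producing an explicit formula for $T_r(x)$ (which immediately recovers the paper's remark that $T_r(x)=\sum_{n\leqslant x}\tau_r(n)$ when $f=\mu$, since then only $d=1$ survives), while the paper's version avoids introducing $\tau_r$ at all and keeps the argument purely at the level of the recursion. Your handling of the summation ranges is sound: every tuple counted by the floor automatically satisfies the hypercube constraints, so the interchanges are legitimate as you say.
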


\begin{proof}
An easy induction shows that, for any $r \in \Z_{\geqslant 2}$
\begin{equation}
   T_{r-1}(x) = \sum_{n_1 \leqslant x} \sum_{n_2 \leqslant x/n_1} \dotsb \sum_{n_{r-2} \leqslant \frac{x}{n_1 \dotsb n_{r-3}}} T_1 \left( \frac{x}{n_1 \dotsb n_{r-2}} \right). \label{eq:id2}
\end{equation}
Now
\begin{eqnarray*}
   S_r(x) &=& r \sum_{n_1 \leqslant x} \sum_{n_2 \leqslant x/n_1} \dotsb \sum_{n_{r-2} \leqslant \frac{x}{n_1 \dotsb n_{r-3}}} \sum_{n_{r-1} \leqslant \frac{x}{n_1 \dotsb n_{r-2}}} f \left( n_{r-1} \right) \sum_{n_r \leqslant \frac{x}{n_1 \dotsb n_{r-1}}} \left \lfloor \frac{x}{n_1 \dotsb n_r} \right \rfloor \\
   &=& r \sum_{n_1 \leqslant x} \sum_{n_2 \leqslant x/n_1} \dotsb \sum_{n_{r-2} \leqslant \frac{x}{n_1 \dotsb n_{r-3}}} \sum_{n_{r-1} \leqslant \frac{x}{n_1 \dotsb n_{r-2}}} \left \lfloor \frac{x}{n_1 \dotsb n_{r-1}} \right \rfloor \sum_{n_r \mid n_{r-1}} f \left( n_{r} \right)  \\
   &=& r \sum_{n_1 \leqslant x} \sum_{n_2 \leqslant x/n_1} \dotsb \sum_{n_{r-2} \leqslant \frac{x}{n_1 \dotsb n_{r-3}}} \sum_{n_{r-1} \leqslant \frac{x}{n_1 \dotsb n_{r-2}}} \left \lfloor \frac{x}{n_1 \dotsb n_{r-1}} \right \rfloor \left( f \star \mathbf{1} \right)  \left( n_{r-1} \right) \\
   &=& r \sum_{n_1 \leqslant x} \sum_{n_2 \leqslant x/n_1} \dotsb \sum_{n_{r-2} \leqslant \frac{x}{n_1 \dotsb n_{r-3}}} T_1 \left( \frac{x}{n_1 \dotsb n_{r-2}} \right) \\
   &=& r T_{r-1}(x)
\end{eqnarray*}
by \eqref{eq:id2}, as required.
\end{proof}
When $f = \mu$, then $\mu \star \mathbf{1} = \delta$ where $\delta$ is the identity element of the Dirichlet convolution product, i.e. $\delta(n) = 1$ if $n=1$ and $\delta(n) = 0$ otherwise, and hence $T_r (x) = \sum_{n \leqslant x}\tau_r(n)$ by induction.

\section{Proof of Theorem~\ref{th:id3}}

\noindent
For any $s \in \R_{\geqslant 0}$, define
$$F(s) := \sum_{d \mid n} \frac{\mu(d)^e f(d)}{d^s} = \prod_{p \mid n} \left( 1 + \frac{(-1)^e f(p)}{p^s} \right).$$
Then
$$\sum_{d \mid n} \mu(d)^e f(d) \log d = - F^{\, \prime} (0)$$
with
$$- \frac{F^{\, \prime}}{F} (s) = \sum_{p \mid n} \frac{f(p) \log p}{f(p) + (-1)^e p^s}$$
and we complete the proof noticing that $F(0) = \prod_{p \mid n} \left( 1 + (-1)^e f(p) \right)$.
\qed

\end{document}